\newtheorem{thm}{Theorem}[section]
\newtheorem{la}[thm]{Lemma}
\newtheorem{Defn}[thm]{Definition}
\newtheorem{Remark}[thm]{Remark}
\newtheorem{Note}[thm]{Note}
\newtheorem{prop}[thm]{Proposition}
\newtheorem{Example}[thm]{Example}
\newtheorem{Examples}[thm]{Examples}
\newtheorem{Problems}[thm]{Problems}
\newtheorem{Problem}[thm]{Problem}
\newtheorem{Convention}[thm]{Convention}
\newtheorem{Number}[thm]{\!\!}
\newenvironment{defn}{\begin{Defn}\rm}{\end{Defn}}
\newenvironment{example}{\begin{Example}\rm}{\end{Example}}
\newenvironment{numba}{\begin{Number}\rm}{\end{Number}}
\newenvironment{proof}{{\noindent\bf Proof.}}%
                  {\nopagebreak\hspace*{\fill}$\Box$\medskip\medskip\par}   
\newcommand{\Punkt}{\nopagebreak\hspace*{\fill}$\Box$}
\newcommand{\ve}{\varepsilon}
\newcommand{\wt}{\widetilde}
\newcommand{\impl}{\Rightarrow}
\newcommand{\mto}{\mapsto}
\newcommand{\N}{{\mathbb N}}
\newcommand{\R}{{\mathbb R}}
\newcommand{\K}{{\mathbb K}}
\newcommand{\C}{{\mathbb C}}
\newcommand{\cO}{{\cal O}}
\newcommand{\cg}{{\mathfrak g}}
\newcommand{\ch}{{\mathfrak h}}
\newcommand{\sub}{\subseteq}
\DeclareMathOperator{\im}{im}
\DeclareMathOperator{\pr}{pr}
\DeclareMathOperator{\id}{id}
\newcommand{\cA}{{\cal A}}
\DeclareMathOperator{\evol}{evol}
\DeclareMathOperator{\Evol}{Evol}
\DeclareMathOperator{\Supp}{supp}
\DeclareMathOperator{\ev}{ev}
\DeclareMathOperator{\graph}{graph}
\begin{document}
%
%
\begin{center}
{\Large\bf Fundamentals of submersions and immersions\\[3mm]
 between infinite-dimensional manifolds}\\[7mm]
{\bf Helge Gl\"{o}ckner}\vspace{4mm}
\end{center}
\begin{abstract}
\noindent
We define submersions $f\colon M\to N$ between manifolds modelled
on locally convex spaces. If $N$ is finite-dimensional
or a Banach manifold, then these coincide with the na\"{\i}ve notion
of a submersion. We study pre-images of submanifolds
under submersions and pre-images under mappings whose differentials have dense image.
An infinite-dimensional version of the constant rank theorem is provided.
We also construct
manifold structures on homogeneous spaces
$G/H$ of infinite-dimensional Lie groups. Some
fundamentals of immersions between infinite-dimensional manifolds
are developed as well.\vspace{4mm}
\end{abstract}
{\small{\bf Classification.}
Primary ; 46G10; 
secondary
22A22; 
22E65; 
22F30; 
58B56; 
58C15 
\\[2mm]
{\bf Keywords and phrases.} Submersion, immersion, subimmersion, fibre product;
infinite-dimensional manifold, infinite-dimensional Lie group,
preimage, inverse image, constant rank theorem, Frobenius theorem, foliation, homogeneous space,
surjective differential, dense image, Banach-Lie group, regular Lie group, inverse function, implicit function,
parameters}\vspace{6mm}

\noindent
{\bf\large Introduction and statement of the results}\\[4mm]
In this article, we define and study
submersions and immersions between manifolds
modelled on arbitrary locally convex spaces,
with a view towards applications
in the theory of infinite-dimensional
Lie groupoids (cf.\ \cite{SaW})
and infinite-dimensional
Lie groups.\\[2.3mm]
Recall that a vector subspace $F$ of a (real or complex)
topological vector space $E$ is called \emph{complemented
as a topological vector space} (or \emph{complemented},
in short) if there exists a vector subspace $C\sub E$ such that
the addition map
\[
F\times C\to E,\quad (x,y)\mto x+y
\]
is an isomorphism of topological vector spaces if we endow $F$ and $C$
with the topology induced by $E$ and $F\times C$ with the product topology.
Then $F$ is closed in $E$, in particular, and $C\cong E/F$ as a topological
vector space.
We say that a vector subspace $F$ of $E$ is \emph{co-Banach}
if $F$ is complemented in~$E$ and $E/F$ is a Banach space.\\[2.3mm]
Let $\K\in \{\R,\C\}$.
If $\K=\R$,
let $r\in \N\cup\{\infty\}$ (in which case $C^r_\R$ stands for $C^r$-maps over the real
ground field as in
\cite{BGN}, \cite{RES} or \cite{GaN})
or $r=\omega$
(in which case $C^\omega_\R$ stands for real analytic maps, as in \cite{RES} or \cite{GaN}).
If $\K=\C$, let $r=\omega$ and write $C^\omega_\C$
for complex analytic maps (as in \cite{BaS}, \cite{RES} or \cite{GaN}).
In either case,
let $f\colon M\to N$ be a $C^r_\K$-map between $C^r_\K$-manifolds
modelled on locally convex topological $\K$-vector spaces.
Let $E$ be the modelling space of~$M$
and $F$ be the modelling space of~$N$.\\[4mm]
{\bf Definition.}
We say that $f$ is a \emph{$C^r_\K$-submersion}
if, for each $x\in M$, there exists a chart $\phi\colon U_\phi\to V_\phi\sub E$
of~$M$ with $x\in U_\phi$
and a chart $\psi\colon U_\psi\to V_\psi\sub F$ of~$N$ with $f(x)\in U_\psi$
such that $f(U_\phi)\sub U_\psi$ and
\[
\psi\circ f\circ \phi^{-1}=\pi|_{V_\phi}
\]
for a continuous linear map $\pi\colon E\to F$ which has a continuous
linear right inverse $\sigma\colon F\to E$ (i.e., $\pi\circ\sigma=\id_F$).\\[4mm]
Thus, identifying $E=\ker(\pi)\oplus \sigma(F)$ with $\ker(\pi)\times F$,
$\pi$ corresponds to the projection onto the second factor
(and $f$ locally looks like this projection).
The following condition may be easier to check.\\[4mm]
{\bf Definition.}
We say that $f$ is a \emph{na\"{\i}ve submersion}
if, for each $x\in M$, the continuous linear map $T_xf\colon T_xM\to T_{f(x)}N$
has a continuous linear right inverse.\\[4mm]
{\bf Remarks.}
\begin{itemize}
\item[(a)]
If $N$ is a Banach manifold, then $f\colon M\to N$ is a na\"{\i}ve submersion
if and only if $T_xf$ is surjective for each $x\in M$ and $\ker(T_xf)$ is co-Banach
in $T_xM$ (by the Open Mapping Theorem).
\item[(b)]
If $N$ is a finite-dimensional manifold, then
$f\colon M\to N$ is a na\"{\i}ve submersion if and only if $T_xf$ is surjective
for each $x\in M$ (as closed vector subspaces of finite codimension
are always complemented and hence co-Banach).
\item[(c)]
Every $C^r_\K$-submersion is a na\"{\i}ve submersion
because $T_xf$ corresponds to $\pi$ if
we identify $T_xM$ with $T_{\phi(x)}E\cong E$
and $T_{f(x)}N$ with $T_{\psi(f(x))}F\cong F$.\vspace{2mm}
\end{itemize}
{\bf Theorem A}
\emph{Let $f\colon M\to N$ be a $C^r_\K$-map between $C^r_\K$-manifolds
modelled on locally convex topological $\K$-vector spaces.
If $N$ is a Banach manifold and $r\geq 2$ or $N$ is finite-dimensional,
then $f$ is a $C^r_\K$-submersion if and only if $f$ is a na\"{\i}ve submersion.}\\[4mm]
Every $C^r_\K$-submersion $f\colon M\to N$
is an open map and admits local $C^r_\K$-sections around each point in its
image (Lemma~\ref{propsec}).
Hence, if $f\colon M\to N$ is a surjective $C^r_\K$-submersion
and $g\colon N\to Y$ a map to a $C^r_\K$-manifold $Y$
modelled on a locally convex topological $\K$-vector space,
then $g$ is $C^r_\K$ if and only if $g\circ f\colon M\to Y$ is $C^r_\K$
(Lemma~\ref{checkdiff}).
In particular, the $C^r_\K$-manifold
structure on~$N$ making the surjective map $f$ a $C^r_\K$-submersion
is essentially unique (see Lemma~\ref{uniquesub} for details).\\[2.3mm]
Let $M$ be a $C^r_\K$-manifold modelled on a locally convex topological
$\K$-vector space $E$ and $F\sub E$ be a closed vector subspace.
Recall that a subset $N\sub M$ is called a \emph{$C^r_\K$-submanifold}
of~$M$ modelled on~$F$ if, for each $x\in N$,
there exists a chart $\phi\colon U_\phi\to V_\phi\sub E$ of~$M$
with $x\in U_\phi$ such that
\[
\phi(U_\phi\cap N)=V_\phi\cap F.
\]
Then $N$ is a $C^r_\K$-manifold modelled on~$F$, with the maximal $C^r_\K$-atlas
containing the maps $\phi|_{U_\phi\cap N}\colon U_\phi\cap N\to V_\phi\cap N$
for all $\phi$ as just described.
If $E/F$ has finite dimension $k$,
then we say that $N$ has finite codimension in~$M$
and call $k$ its codimension.
If $F$ is complemented in~$E$ as a topological vector space, then $N$
is called a \emph{split}
$C^r_\K$-submanifold of~$M$.
If $N\sub M$ is a subset
all of whose connected components $C$ are open in~$N$
and each $C$ is a $C^r_\K$-submanifold of~$M$ modelled on some closed vector subspace $F_C\sub E$,
then $N$ is called a \emph{not necessarily pure} submanifold of~$M$
(see Section~\ref{secprel} for details).\\[2.3mm]
$C^r_\K$-submersions, defined as above,  go along well with fibre products
also in the case of manifolds modelled on locally convex spaces
(as announced in the author's
review \cite{REV} of \cite{Woc}; in the meantime,
a proof was also given in unpublished lecture notes, \cite[Proposition~C.8]{Wo2}).\\[4mm]
{\bf Theorem B.}
\emph{Let $M_1$, $M_2$ and $N$ be $C^r_\K$-manifolds modelled on
locally convex topological $\K$-vector spaces and
$f\colon M_1\to N$ as well as $g\colon M_2\to N$ be $C^r_\K$-maps.
If $f$ $($resp., $g)$ is a $C^r_\K$-submersion, then the fibre product}
\[
S:=M_1 \times_{f,g} M_2:=\{(x,y)\in M_1\times M_2\colon f(x)=g(y)\}
\]
\emph{is a $($not necessarily pure$)$ split $C^r_\K$-submanifold of $M_1\times M_2$,
and the projection $\pr_2 \colon M_1\times M_2\to M_2$
restricts to a $C^r_\K$-submersion $\pr_2|_S\colon S\to N$
$($resp., $\pr_1\colon M_1\times M_2\to M_1$ restricts to a $C^r_\K$-submersion
$\pr_1|_S\colon S\to M_1)$.}\\[4mm]
Submanifolds can be pulled back along submersions.\\[4mm]
{\bf Theorem C.}
\emph{Let $f\colon M\to N$ be a $C^r_\K$-submersion between
$C^r_\K$-manifolds modelled on locally convex topological
$\K$-vector spaces. Then $f^{-1}(S)$ is a $($not necessarily pure$)$ $C^r_\K$-submanifold
of $M$ for each $C^r_\K$-submanifold $S$ of~$N$,
and $f$ restricts to a $C^r_\K$-submersion $f^{-1}(S)\to S$.
Moreover, $T_x(f^{-1}(S))=(T_xf)^{-1}(T_{f(x)}S)$
for each $x\in f^{-1}(S)$.
If $S$ has finite codimension $k$ in~$M$,
then also $f^{-1}(S)$ has codimension $k$ in~$M$.
If $S$ is a split $C^r_\K$-submanifold in~$N$, then
$f^{-1}(S)$ is a split $C^r_\K$-submanifold in~$M$.}\\[4mm]
Taking $S$ as a singleton, we obtain a Regular Value Theorem:\\[4mm]
{\bf Theorem D.}
\emph{Let $f\colon M\to N$ be a $C^r_\K$-map between
$C^r_\K$-manifolds modelled on locally convex topological
$\K$-vector spaces. If $M$ has infinite dimension,
assume $r\geq 2$.
Let $y\in f(M)$. If $N$ is modelled on a Banach space,
$\ker T_xf$ is co-Banach in $T_xM$ and $T_xf$ is surjective
for all $x\in f^{-1}(\{y\})$, then
$f^{-1}(\{y\})$ is a $($not necessarily pure$)$ split $C^r_\K$-submanifold of~$M$
and $T_x(f^{-1}(\{y\}))=\ker T_xf$ for each $x\in f^{-1}(\{y\})$.
If $N$ has finite dimension~$k$, then $f^{-1}(\{y\})$ has codimension
$k$ in~$M$.}\\[4mm]
For finite-dimensional $N$, we recover
the Regular Value Theorem from~\cite{NaW}:\\[4mm]
\emph{Let $f\colon M\to N$ be a $C^r_\K$-map from
a $C^r_\K$-manifold $M$ to a finite-dimensional $C^r_\K$-manifold $N$.
Let $y\in f(M)$. If $T_xf$ is surjective
for all $x\in f^{-1}(\{y\})$, then
$f^{-1}(\{y\})$ is a $($not necessarily pure$)$ split $C^r_\K$-submanifold of~$M$,
of codimension $\dim_\K(N)$.}\\[4mm]
Also the following
variant may be of interest (compare \cite{Fre}
for the case of tame Fr\'{e}chet spaces):\\[4mm]
{\bf Theorem E.}
\emph{Let $M$ and $N$ be $C^r_\K$-manifolds
modelled on locally convex topological $\K$-vector spaces,
$S\sub N$ be a $C^r_\K$-submanifold of finite codimension~$k$
and
$f\colon M\to N$ be a $C^r_\K$-map such that}
\[
T_xf\colon T_xM\to T_{f(x)}N
\]
\emph{has dense image for each $x\in f^{-1}(S)$.
Then $f^{-1}(S)$ is a $($not necessarily pure$)$ split $C^r_\K$-submanifold of $M$, of finite codimension $k$.
Moreover, $T_x(f^{-1}(S))=(T_xf)^{-1}(T_{f(x)}S)$ for each $x\in f^{-1}(S)$.}\\[4mm]
If $M$ and $N$ are $C^r_\K$-manifolds modelled
on locally convex topological $\K$-vector spaces,
we call a $C^r_\K$-map $f\colon M\to N$ a \emph{densemersion}
if $T_xf\colon T_xM\to T_{f(x)}N$ has dense image for each $x\in M$.
As a special case of Theorem~E, we get:\\[4mm]
\emph{If a $C^r_\K$-map $f\colon M\to N$ is a densemersion,
then $f^{-1}(S)$ is a $($not necessarily pure$)$ $C^r_\K$-submanifold of~$M$ of codimension~$k$,
for each $C^r_\K$-submanifold~$S$ of $N$ of finite codimension~$k$.
Moreover, $T_x (f^{-1}(S))=(T_xf)^{-1}(T_{f(x)}S)$
for all $x\in f^{-1}(S)$.}\\[4mm]
We also have a version of the Constant Rank Theorem.\\[4mm]
{\bf Theorem F.}
\emph{Let $M$ be a $C^r_\K$-manifold modelled on a locally convex topological
$\K$-vector space, $N$ be a finite-dimensional $C^r_\K$-manifold, $\ell\in \N$
and $f\colon M\to N$ be a $C^r_\K$-map such that the image $\im(T_xf)$
has dimension $\ell$, for each $x\in M$.
Let $n$ be the dimension of~$N$.
Then, for each $x_0\in M$,
there exists a chart $\phi\colon U_\phi\to V_\phi\sub E$ of $M$
and a chart $\psi\colon U_\psi\to V_\psi\sub \R^n$ of~$N$
such that $x_0\in U_\phi$, $f(U_\phi)\sub U_\psi$
and}
\[
(\psi\circ f\circ \phi^{-1})(x)=(\pi(x),0)\quad\mbox{for all $\,x\in V_\phi$,}
\]
\emph{for a continuous linear map $\pi\colon E\to \R^\ell$
which has a continuous linear right inverse.
In particular, $f^{-1}(\{y\})$ is a split $C^r_\K$-submanifold
of~$M$ of codimension~$\ell$,
for each $y\in f(M)$, and $T_x f^{-1}(\{y\})=\ker T_xf$
for each $x\in f^{-1}(\{y\})$.
Moreover, each $x_0\in M$ has an open neighbourhood $O\sub M$ such that
$f(O)$ is a $C^r_\K$-submanifold of~$N$ and there exists a $C^r_\K$-diffeomorphism}
\[
\theta\colon f(O)\times S\to O
\]
\emph{such that $\theta(\{y\}\times S)=O\cap f^{-1}(\{y\})$ for all $y\in f(O)$.}\\[4mm]
Note that
an analogous local decomposition is available for $C^r_\K$-submersions
as these locally look like a projection $\pr_1\colon E_1\times E_2\to E_1$,
in which case we can take $S:=E_2$ and $\theta:=\id_{E_1\times E_2}$
(see Lemma~\ref{locdeco} for details).\\[2.3mm]
For a Constant Rank Theorem for mappings
between Banach manifolds (with or without boundary or corners)
and further results on subimmersions,
see already~\cite{MaO}. The book also
contains an analogue of Godement's theorem
for Banach manifolds~$M$, i.e., a characterization
of those equivalence relations $\sim$ on~$M$
for which the space $M/\!\!\sim$
of equivalence classes can be given
a Banach manifold structure which turns the canonical
quotient map $q\colon M\to M/\!\!\sim$, $\,x\mto [x]$
into a submersion (cf.\ \cite{Ser}
for the finite-dimensional case).\\[2.3mm]
If $G$ is a Lie group modelled
on a locally convex space and $g\in G$,
let $\lambda_g\colon G\to G$ be the left translation map $x\mto gx$
and write $g.v:=T\lambda_g(v)$ for $v\in TG$.
Let $e$ be the neutral element of~$G$ and $\cg:=L(G):=T_eG$
be its Lie algebra.
We recall that $G$ is called \emph{regular}
if the initial value problem
\[
\eta'(t)=\eta(t).\gamma(t),\quad \eta(0)=e
\]
has a (necessarily unique) solution
$\Evol(\gamma):=\eta\in C^\infty([0,1],G)$ for
each $\gamma\in C^\infty([0,1],\cg)$
and the map
\[
\evol\colon C^\infty([0,1],\cg)\to G,\quad \gamma\mto\Evol(\gamma)(1)
\]
is smooth (see \cite{GaN}, \cite{SUR}, cf.\ \cite{Mil}).\\[2.3mm]
We have results on homogeneous
spaces of infinite-dimensional Lie groups.
Notably, we obtain (as announced in \cite[\S4(b)]{OWR}:\\[4mm]
{\bf Theorem G.}
\emph{Let $G$ be a $C^r_\K$-Lie group
modelled on a locally convex space
and $H$ be a subgroup of $G$ which is a split
$C^r_\K$-submanifold of $G$
such that}
\begin{itemize}
\item[(a)]
\emph{$H$ is a regular Lie group
and $H$ is co-Banach; or}
\item[(b)] 
\emph{$H$ is a Banach-Lie group.}
\end{itemize}
\emph{Then $G/H$ can be given a $C^r_\K$-manifold structure which turns the canonical quotient
map $q\colon G\to G/H$ into a $C^r_\K$-submersion.
If $H$ is, moreover, a normal subgroup of~$G$,
then $G/H$ is a $C^r_\K$-Lie group.}\\[4mm]
Note that $G/H$ is a Banach manifold
in the situation of~(a).
We also have a more technical result
independent of regularity (Proposition~\ref{noregu}).\\[2.3mm]
Recall that a map $j\colon X\to Y$ between topological spaces
is called a \emph{topological embedding}
if $j$ is a homeomorphism onto its image, i.e,
the co-restriction $j|^{j(X)}\colon X\to j(X)$
is a homeomorphism if we endow the image $j(X)$ with the topology induced by~$Y$.\\[2.3mm]
Return to $r\in\N\cup\{\infty,\omega\}$ if $\K=\R$,
$r=\omega$ if $\K=\C$.
Let $f\colon M\to N$ be a $C^r_\K$-map between $C^r_\K$-manifolds
modelled on locally convex topological $\K$-vector spaces.
Let $E$ be the modelling space of~$M$
and $F$ be the modelling space of~$N$.\\[4mm]
{\bf Definition.}
We say that $f$ is a \emph{$C^r_\K$-immersion}
if, for each $x\in M$, there exists a chart $\phi\colon U_\phi\to V_\phi\sub E$
of~$M$ with $x\in U_\phi$
and a chart $\psi\colon U_\psi\to V_\psi\sub F$ of~$N$ with $f(x)\in U_\psi$
such that $f(U_\phi)\sub U_\psi$ and
\[
\psi\circ f\circ \phi^{-1}=j|_{V_\phi}
\]
for a linear map $j \colon E\to F$ which
is a topological embedding onto a complemented vector subspace
of~$F$. If $f$ is both a $C^r_\K$-immersion and a topological embedding,
then we call $f$ a \emph{$C^r_\K$-embedding}.\\[4mm]
We shall see that a $C^r_\K$-map $f\colon M\to N$ is a $C^r_\K$-embedding if and only
if $f(M)$ is a split $C^r_\K$-submanifold of~$N$ and $f|^{f(M)}\colon M\to f(M)$ is
a $C^r_\K$-diffeomorphism (Lemma~\ref{easieremb}).\\[2.3mm]
The following condition may be easier to check.\\[4mm]
{\bf Definition.}
We say that $f$ is a \emph{na\"{\i}ve immersion}
if, for each $x\in M$,
the tangent map $T_xf\colon T_xM\to T_{f(x)}N$ is a topological
embedding whose image is a complemented
vector subspace of $T_{f(x)}N$.\\[4mm]
{\bf Remarks.}
\begin{itemize}
\item[(a)]
If $M$ is a finite-dimensional manifold, then
$f\colon M\to N$ is a na\"{\i}ve immersion if and only if $T_xf$ is injective
for each $x\in M$.
\item[(b)]
Every $C^r_\K$-immersion is a na\"{\i}ve immersion.\vspace{2mm}
\end{itemize}
{\bf Theorem H.}
\emph{Let $f\colon M\to N$ be a $C^r_\K$-map between $C^r_\K$-manifolds
modelled on locally convex topological $\K$-vector spaces.
Assume that}
\begin{itemize}
\item[(a)]
\emph{$M$ is a Banach manifold and $r\geq 2$; or}
\item[(b)]
\emph{$M$ is finite-dimensional.}
\end{itemize}
\emph{Then $f$ is a $C^r_\K$-immersion if and only if $f$ is a na\"{\i}ve immersion.}\\[4mm]
Important examples of $C^r_\K$-maps with injective tangent
maps arise from actions of Lie groups.
Let us say that
a Lie group $G$ modelled on a locally convex space \emph{has an exponential function}
if, for each $v\in L(G)$, there is a (necessarily unique) smooth homomorphism
$\gamma_v\colon \R\to G$ such that $(\gamma_v)'(0)=v$.
In this case, we define
\[
\exp_G\colon L(G)\to G,\quad \exp_G(v)=\gamma_v(1).
\]
Then $\gamma_v(t)=\exp_G(tv)$ for all $v\in L(G)$ and $t\in \R$.\\[2.4mm]
Now consider a
smooth Lie group $G$ (modelled on a locally convex space)
which has an exponential function. Let
$\sigma\colon G\times M\to M$, $(g,x)\mto gx$ be a $C^1$-action
on a $C^1$-manifold~$M$ (modelled on some locally convex space)
and $x\in M$ with stabilizer $G_x$ such that there is a smooth manifold
structure on $G/G_x$ which turns the quotient map
$q\colon G\to G/G_x$ into a smooth submersion.
Then the orbit map
\[
b\colon G\to M,\quad g\mto gx
\]
factors to a $C^1$-map
\[
\wt{b}\colon G/G_x\to M,\quad gG_x\mto gx.
\]
We observe:\\[4mm]
{\bf Theorem I.}
\emph{For each $g\in G$, the tangent map $T_{gG_x}\wt{b}\colon T_{gG_x}(G/G_x)\to T_{gx}M$
is injective.}\\[4mm]
This result is of course known in many special cases (like finite-dimensional groups),
and the proof can follow the familiar pattern.\\[2.3mm]
To illustrate our results on immersions and embeddings, let us look at an example.
Let $M$ be a compact smooth manifold and $H$ be
a compact Lie group.
We consider
the conjugation action of $H$
(viewed as the group of constant maps) on the Fr\'{e}chet-Lie group
$G:=C^\infty(M,H)$. Let $\gamma\in G$.
Combining Theorem~H and Theorem~I,
we see that the conjugacy class
\[
\gamma^H:=\{h\gamma h^{-1}\colon h\in H\}
\]
is a split real analytic submanifold in~$G$
and $H/H_\gamma\to \gamma^H$, $hH_\gamma\mto h\gamma h^{-1}$ a real analytic diffeomorphism
(see Example~\ref{conjuex}
for details).\\[4mm]
Let $r\in \N\cup\{\infty\}$. A $C^r$-manifold $M$ modelled
on a locally convex space is called \emph{$C^r$-regular}
if the topology on~$M$ is initial with respect to the set
$C^r(M,\R)$ of all real-valued $C^r$-functions on~$M$.
It is well-known that $M$ is $C^r$-regular if and only
if, for each $x\in M$ and neighbourhood $U\sub M$ of~$x$,
there exists a $C^r$-function $f\colon M\to \R$ with image in $[0,1]$,
support $\Supp(f)\sub U$ and such that $f|_W=1$ for some neighbourhood
$W\sub U$ of~$x$ (see, e.g., \cite{GaN}).
Recall that if $M$ is a $\sigma$-compact finite-dimensional $C^r$-manifold,
of dimension~$m$, say, then $M$
is $C^r$-regular and admits a $C^r$-embedding into $\R^{2m}$,
by Whitney's Embedding Theorem.
As far as infinite-dimensional manifolds
are concerned,
embeddings of
separable metrizable Hilbert manifolds (and some
more general Banach manifolds
and Fr\'{e}chet manifolds) as open subsets
of the modelling space have been studied,
see \cite{Eel}, \cite{Hen}
and the references therein.
The following theorem generalizes a result of~\cite{Dah}
concerning embeddings of manifolds modelled on $\R^I$ for some
set~$I$.\\[4mm]
{\bf Theorem J.}
\emph{Let $r\in \N\cup\{\infty\}$ and $M$ be a $C^r$-regular
$C^r$-manifold modelled on a real locally convex space~$E$.
Then there exists a $C^r$-embedding
$\theta \colon M\to (E\times \R)^M$.}\\[4mm]
\emph{Acknowledgement.} The author thanks A. Schmeding (Trondheim)
for discussions concerning infinite-dimensional Lie groupoids,
which prompted the Constant Rank Theorem
presented in this paper. Also the question solved in Example~\ref{conjuex}
was posed by him.
\section{Preliminaries and basic facts}\label{secprel}
We write $\N=\{1,2,\ldots\}$ and $\N_0:=\N\cup\{0\}$
and put a total order on $\N\cup\{\infty,\omega\}$
by ordering $\N$ as usual and declaring $k< \infty<\omega$
for each $k\in\N$.
All topological spaces and all topological vector spaces considered
in the paper are assumed Hausdorff.
We fix a ground field $\K\in\{\R,\C\}$.
Henceforth, the phrase \emph{locally convex space}
abbreviates \emph{locally convex topological $\K$-vector space}.
Our general references for differential calculus of $C^r$-maps,
real analytic and complex analytic mappings are \cite{RES}
and \cite{GaN} (cf.\ also \cite{BGN}).
If $E$ and $F$ are locally convex spaces, $U\sub E$ an open set and
$f\colon U\to F$ a $C^1$-map, we write
\[
df(x,y):=\frac{d}{dt}\Big|_{t=0}f(x+ty)
\]
for the directional derivative of $f$ at $x\in U$ in the direction
$y\in E$. Then $f'(x):=df(x,.)\colon E\to F$
is a continuous linear map (see \cite{RES} or \cite{GaN}).
Having fixed $\K$, let $r$ be as in the Introduction.
Recall that a $C^r_\K$-manifold modelled on a locally convex space~$E$
is a Hausdorff topological space $M$,
together with a maximal $C^r_\K$-atlas $\cA$,
consisting of homeomorphism $\phi\colon U_\phi\to V_\phi$ from
open subsets $U_\phi\sub M$ onto open sets $V_\phi\sub E$
(the \emph{charts})
such that $\bigcup_{\phi\in\cA}U_\phi=M$
and $\phi\circ \psi^{-1}\colon \psi(U_\phi\cap U_\psi)\to E$
is $C^r_\K$ for all $\phi,\psi\in \cA$.
Manifolds of this form are also called \emph{pure manifolds}.
A topological space $M$ with open connected
components,
together with a $C^r_\K$-manifold structure modelled on  locally convex space $F_C$
on each connected component~$C$ of~$M$,
is called a (not necessarily pure) $C^r_\K$-manifold.
Unless we explicitly state the contrary, only pure manifolds
are considered in this article.
The definition of a $C^r_\K$-submanifold $N\sub M$ modelled on a closed vector subspace
$F$ of the modelling space $E$ of~$M$
was already given in the Introduction.
If $N$ is such a submanifold and, moreover,
$F$ is complemented in~$E$, then we call $N$
a \emph{split} $C^r_\K$-submanifold of~$M$.
A subset $N$ of a $C^r_\K$-manifold $M$ is called a (not necessarily pure)
$C^r_\K$-submanifold of~$M$ if $N$ has open connected components
in the induced topology and each connected component $C$ of~$N$
is a submanifold of $M$ modelled on some closed vector subspace $F_C$
of the modelling space~$E$ of~$M$ in the previous sense (as described in the Introduction).
If, moreover, each $F_C$ is complemeted in~$E$,
then $N$ is called a \emph{split} (not necessarily pure) submanifold
of~$M$.
For example, if
$f\colon M\to N$ is a $C^r_\K$-map between $C^r_\K$-manifolds,
then its graph
\[
\mbox{graph}(f)=\{(x,f(x))\colon x\in M\}
\]
is a split $C^r_\K$-submanifold of $M\times N$.
The following fact is used repeatedly (without further mention):
If $f\colon M\to N$ is a map between $C^r_\K$-manifolds
and $f(M)\sub S$ for some $C^r_\K$-submanifold $S\sub N$,
then $f$ is $C^r_\K$ if and only if $f|^S\colon M\to S$ is $C^r_\K$ (see \cite{GaN}).
In this article, the words \emph{$C^r_\K$manifold} (and \emph{$C^r_\K$-Lie group})
always refer to
$C^r_\K$-manifolds (and $C^r_\K$-Lie groups) modelled
on locally convex spaces (which need not be finite-dimensional).
\begin{numba}\label{invpara}
Fix $\K$ and let $r\in \N\cup\{\infty,\omega\}$ be as in the Introduction.
Our main tool is the \emph{Inverse Function Theorem with Parameters}
from \cite{IMP} (for smooth or $\K$-analytic $f$ or finite-dimensional $F$)
and its strengthened version \cite{IM2} (including the $C^r$-case for finite $r$,
without loss of derivatives):\\[4mm]
\emph{Let $E$ be a locally convex space, $F$ be a Banach space,
$U\sub E$ and $V\sub F$ be open, $(x_0,y_0)\in U\times V$
aand $f\colon U\times V\to F$ be a $C^r_\K$-map
such that $f_{x_0}\colon V\to F$, $y\mto f(x_0,y)$
has invertible differential $f_{x_0}'(y_0)\colon F\to F$
at~$y_0$. If $F$ does not have finite dimension, assume that $r\geq 2$.
After shrinking the open neighbourhoods $U$ of $x_0$
and $V$ of~$y_0$, one can achieve the following}:
\begin{itemize}
\item[(a)]
\emph{$f_x:=f(x,.)\colon V\to F$ has open image and $f_x\colon V\to f_x(V)$ is a $C^r_\K$-diffeomorphism,
for each $x\in U$; and}
\item[(b)]
\emph{The image $\Omega$ of the map $\theta\colon U\times V\to U\times F$, $\theta(x,y):=(x,f_p(y))$
is open in $U\times F$ and $\theta\colon U\times V\to\Omega$ is a $C^r_\K$-diffeomorphism
with inverse}
\[
\theta^{-1}\colon\Omega\to U\times V,\quad (x,z)\mto(x,f_x^{-1}(z)).
\]
\emph{In particular, the map $\Omega\to V$, $(x,z)\mto (f_x)^{-1}(z)$ is $C^r_\K$.}
\end{itemize}
\end{numba}
In the remainder of this section, we compile some simple
basic facts on $C^r_\K$-submersions
and submanifolds. Certainly several of them may be known or
part of the folklore, but we find it useful to assemble them at one place
in self-contained form.
\begin{la}\label{locapro}
A $C^r_\K$-map $f\colon M\to N$ between $C^r_\K$-manifolds is a $C^r_\K$-submersion
if and only if, for each $x\in M$, there exists an open neighbourhood $U\sub M$ of $x$,
an open neighbourhood $U_1\sub N$ of $f(x)$, a $C^r_\K$-manifold $U_2$
and a $C^r_\K$-diffeomorphism $\theta\colon U_1\times U_2\to U$ such that
$f(U)=U_1$ and $f\circ \theta=\pr_1\colon U_1\times U_2\to U_1$, $(x_1,x_2)\mto x_1$.
\end{la}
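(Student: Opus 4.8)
The plan is to prove both implications by transporting everything to the model spaces through charts, the forward direction amounting to unwinding the complementation built into the definition of a submersion, and the converse to reading off charts from the given diffeomorphism~$\theta$.

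For the forward direction (submersion $\impl$ local product), I would fix $x\in M$ and invoke the definition to obtain charts $\phi\colon U_\phi\to V_\phi\sub E$ and $\psi\colon U_\psi\to V_\psi\sub F$ with $\psi\circ f\circ\phi^{-1}=\pi|_{V_\phi}$, where $\pi\colon E\to F$ is continuous linear with continuous linear right inverse $\sigma$. Setting $E_2:=\ker(\pi)$ and using $E=\sigma(F)\oplus E_2$, I would form the topological isomorphism $\Lambda\colon F\times E_2\to E$, $(y,k)\mto\sigma(y)+k$, which satisfies $\pi\circ\Lambda=\pr_1$. Pulling $V_\phi$ back to the open set $\Lambda^{-1}(V_\phi)$ and using that the product topology has a neighbourhood basis of boxes, I would choose open $A\sub V_\psi\sub F$ and $B\sub E_2$ with $\Lambda^{-1}(\phi(x))\in A\times B\sub\Lambda^{-1}(V_\phi)$, and then put $U_1:=\psi^{-1}(A)$, $U_2:=B$, $U:=\phi^{-1}(\Lambda(A\times B))$ and $\theta(n,k):=\phi^{-1}(\Lambda(\psi(n),k))$. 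The remaining checks are routine: $\theta$ is a $C^r_\K$-diffeomorphism (inverse $u\mto(\psi^{-1}(\pi(\phi(u))),\pr_{E_2}(\Lambda^{-1}(\phi(u))))$), one has $f(U)=U_1$, and $f\circ\theta=\pr_1$ since $\pi\circ\Lambda=\pr_1$.

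For the converse, given $\theta\colon U_1\times U_2\to U$ with $f\circ\theta=\pr_1$, I would construct the charts required by the definition near a fixed $x$. Writing $\theta^{-1}(x)=(x_1,x_2)$, note $x_1=f(x)$; I would pick a chart $\psi\colon U_\psi\to V_\psi\sub F$ of $N$ with $x_1\in U_\psi\sub U_1$ and a chart $\chi\colon U_\chi\to V_\chi\sub E_2$ of $U_2$ at $x_2$, where $E_2$ is the local model of $U_2$ there. Then $\phi':=(\psi\times\chi)\circ\theta^{-1}$, restricted to $U':=\theta(U_\psi\times U_\chi)$, is a $C^r_\K$-diffeomorphism onto $V_\psi\times V_\chi\sub F\times E_2$, and a direct computation using $f\circ\theta=\pr_1$ yields $\psi\circ f\circ(\phi')^{-1}=\pr_F|_{V_\psi\times V_\chi}$, the restriction of the projection $\pr_F\colon F\times E_2\to F$, which has continuous linear right inverse $y\mto(y,0)$.

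The main obstacle will be that $\phi'$ takes values in $F\times E_2$, whereas the definition of a $C^r_\K$-submersion requires a chart of $M$ valued in its modelling space~$E$. I would resolve this by observing that the existence of $\theta$ already forces $E\cong F\times E_2$ as topological vector spaces: comparing $\phi'$ with a genuine chart $\rho\colon U_\rho\to V_\rho\sub E$ of $M$ at $x$ with $U_\rho\sub U'$, the transition $\phi'\circ\rho^{-1}$ is a $C^r_\K$-diffeomorphism between open subsets of $E$ and $F\times E_2$, so its differential at $\rho(x)$ is a continuous linear isomorphism; I would let $\Lambda\colon F\times E_2\to E$ be its inverse. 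Then $\phi:=\Lambda\circ\phi'$ is a legitimate chart of $M$ (compatible with the atlas, as $\phi\circ\rho^{-1}=\Lambda\circ(\phi'\circ\rho^{-1})$ is a $C^r_\K$-diffeomorphism), and $\psi\circ f\circ\phi^{-1}=\pr_F\circ\Lambda^{-1}=:\pi$ is continuous linear with continuous linear right inverse $\sigma:=\Lambda(\cdot,0)$, since $\pi\circ\sigma=\id_F$. This exhibits $f$ as a $C^r_\K$-submersion and completes the argument.
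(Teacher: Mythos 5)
Your proof is correct and follows essentially the same route as the paper's: the forward direction unwinds the splitting $E=\sigma(F)\oplus\ker(\pi)$ to build $\theta$ from box neighbourhoods, and the converse assembles a product chart $(\psi\times\chi)\circ\theta^{-1}$ and reads off the projection. The one place you go beyond the paper is in explicitly repairing the fact that this product chart is valued in $F\times E_2$ rather than in the modelling space $E$ of $M$ (via the differential of a transition map, which is a topological linear isomorphism since $r\geq 1$); the paper's proof leaves this identification implicit, so your added care is welcome but does not change the argument.
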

\begin{proof}
If $f$ is a $C^r_\K$-submersion and $x\in M$, let $\phi\colon U_\phi\to E$
and $\psi\colon U_\psi\to V_\psi\sub F$ be charts of $M$ and $N$, respectively, such that
$x\in U_\phi$, $f(U_\phi)\sub U_\psi$ and $\psi\circ\phi^{-1}=\pi|_{V_\phi}$
for a continuous linear map $\pi\colon E\to F$ admitting a continuous linear right
inverse $\sigma\colon F\to E$. Then $\alpha:=\pi|_{\sigma(F)}\colon \sigma(F)\to F$
is an isomorphism of topological vector spaces.
After replacing the modelling space $F$ of $N$ with $\sigma(F)$ and $\psi$ with $\alpha^{-1}\circ\psi$,
we may assume that $E=F\oplus \ker(\pi)$ as a topological vector space and
$\pi$ is the projection $F\oplus \ker(\pi)\to F$, $(x_1,x_2)\mto x_1$.
After shrinkung $U_\phi$, we may assume that $V_\phi=P\times Q$
with open subsets $P\sub F$ and $Q\sub\ker(\pi)$.
We may now replace $V_\psi$ with $P$ and hence assume $V_\phi=P$.
It remains to set $U:=U_\phi$, $U_1:=U_\psi$ and $U_2:=Q$.
Let $\pr_j\colon U_1\times U_2\to U_1$, $(x_1,x_2)\mto x_j$ be the projection
onto the $j$-th component for $j\in\{1,2\}$.
Then $\theta:=\phi^{-1}\circ (\psi\times \id_Q)\colon U_1\times Q\to U$
is a $C^r_\K$-diffeomorphism such that $f\circ \theta=\psi^{-1}\circ\psi\circ f\circ \phi^{-1}\circ (\psi\times \id_Q)
=\psi^{-1}\circ\psi\circ f\circ \phi^{-1}\circ (\psi\circ\pr_1,\pr_2)
=\psi^{-1}\circ \pi|_{P\times Q}\circ(\psi\circ \pr_1,\pr_2)=\psi^{-1}\circ\psi\circ\pr_1=\pr_1$.

If, conversely, for $x\in M$ we can find $U$, $U_1$, $U_2$ and $\theta$ as described in the lemma,
then we choose charts $\phi\colon U_\phi\to V_\phi\sub E_1$
and $\psi\colon U_\psi\to V_\psi\sub E_2$ for $U_1$ and $U_2$, respectively, such that
$\theta^{-1}(x)\in U_\phi\times U_\psi$. Set $W:=\theta(U_\phi\times U_\psi)$.
Let $\pr_1\colon U_1\times U_2\to U_1$ and $\pi\colon E_1\times E_2\to E_1$
be the projection onto the first component.
Then $\tau:=(\phi\times \psi)\circ \theta^{-1}|_W\colon W\to V_\phi\times V_\psi\sub E_1\times E_2$
is a $C^r_\K$-diffeomorphism and $\phi\circ f\circ\tau^{-1}=\phi\circ f\circ \theta\circ (\phi^{-1}\times\psi^{-1})
=\phi\circ \pr_1\circ (\phi^{-1}\times \psi^{-1})=\pi|_{V_\phi}$, entailing that $f$ is a $C^r_\K$-submersion. 
\end{proof}
\begin{la}\label{composub}
Let $f\colon M\to N$ and $g\colon N\to K$ be $C^r_\K$-submersions between
$C^r_\K$-manifolds. Then also $g\circ f\colon M\to K$ is a $C^r_\K$-submersion.
\end{la}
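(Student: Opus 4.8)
The plan is to reduce everything to the local normal form supplied by Lemma~\ref{locapro}. Since both $f$ and $g$ are $C^r_\K$-submersions, each looks locally like a projection from a product onto one factor, and composing two projections of this kind is again a projection; the only real work is to arrange the two local charts so that they fit together.

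First I would fix $x\in M$ and set $y:=f(x)\in N$ and $z:=g(y)\in K$. Applying Lemma~\ref{locapro} to $g$ at the point $y$, I obtain an open neighbourhood $V\sub N$ of $y$, an open neighbourhood $V_1\sub K$ of $z$, a $C^r_\K$-manifold $V_2$, and a $C^r_\K$-diffeomorphism $\rho\colon V_1\times V_2\to V$ with $g(V)=V_1$ and $g\circ\rho=\pr_1$. Next I apply Lemma~\ref{locapro} to $f$ at $x$, but I want the target neighbourhood of $y$ to sit inside $V$; since $f$ is continuous I may shrink the neighbourhood of $x$ first, and the lemma then yields an open $U\sub M$ containing $x$, an open neighbourhood $U_1\sub N$ of $y$ with $U_1\sub V$, a $C^r_\K$-manifold $U_2$, and a $C^r_\K$-diffeomorphism $\theta\colon U_1\times U_2\to U$ with $f(U)=U_1$ and $f\circ\theta=\pr_1\colon U_1\times U_2\to U_1$.

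The step that needs a little care is assembling the source of the composite diffeomorphism, because $U_1$ is itself only a piece of $N$ and carries its own submersion structure from $\rho$. Here the point is that $\rho^{-1}(U_1)$ is an open subset of $V_1\times V_2$ on which $\pr_1$ restricts to the map $g|_{U_1}$; after shrinking $U$ (hence $U_1$) I may assume $U_1=\rho(W_1\times V_2)$ for an open $W_1\sub V_1$, so that $\rho$ restricts to a diffeomorphism $W_1\times V_2\to U_1$ intertwining $\pr_1$ with $g$. I then form the diffeomorphism
\[
\Theta\colon W_1\times (V_2\times U_2)\to U,\qquad
\Theta(w,v_2,u_2):=\theta\bigl(\rho(w,v_2),u_2\bigr),
\]
which is $C^r_\K$ as a composite of the $C^r_\K$-diffeomorphisms $\rho$ and $\theta$ together with a reshuffling of factors, with $V_2\times U_2$ a $C^r_\K$-manifold.

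Finally I would verify the projection identity. For $(w,v_2,u_2)$ in the source we have $f(\Theta(w,v_2,u_2))=\rho(w,v_2)$ by the defining property of $\theta$, and hence $g(f(\Theta(w,v_2,u_2)))=g(\rho(w,v_2))=\pr_1(w,v_2)=w$. Thus $(g\circ f)\circ\Theta$ equals the projection $W_1\times(V_2\times U_2)\to W_1$, $(g\circ f)(U)=W_1$ is open in $K$, and $W_1\times(V_2\times U_2)$ is a product of a neighbourhood of $z$ in $K$ with a $C^r_\K$-manifold. Since $x\in M$ was arbitrary, the converse direction of Lemma~\ref{locapro} applies and shows $g\circ f$ is a $C^r_\K$-submersion. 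I expect the main obstacle to be purely bookkeeping: matching the neighbourhoods so that $U_1$ sits inside $V$ and is saturated for $\rho$, i.e.\ of the form $\rho(W_1\times V_2)$, which is what lets the two product structures be stacked into a single one.
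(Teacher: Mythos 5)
Your argument follows essentially the same route as the paper's proof: apply Lemma~\ref{locapro} to each of $f$ and $g$ and stack the two product decompositions over a common neighbourhood of $f(x)$ in $N$ (the paper applies the lemma to $f$ first and then to $g$; you reverse the order, which is immaterial). The one imprecise step is the claim that after shrinking $U$ you may assume $U_1=\rho(W_1\times V_2)$ with $V_2$ unchanged: a $\rho$-saturated set containing $y$ must contain the entire fibre $\rho(\{w_0\}\times V_2)$ through $y$, which the original $U_1$ need not contain, so no amount of shrinking produces such a set. The repair is immediate: choose a product neighbourhood $W_1\times W_2\sub\rho^{-1}(U_1)$ of $\rho^{-1}(y)$, replace $U_1$ by $\rho(W_1\times W_2)$ and $U$ by $\theta\bigl(\rho(W_1\times W_2)\times U_2\bigr)$, after which your map $\Theta$ works verbatim with $W_2$ in place of $V_2$ and the rest of the proof goes through.
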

\begin{proof}
Given $x\in M$, we find an open neighbourhood $U\sub M$ of $x$
and a $C^r_\K$-diffeomorphism $\theta\colon U_1\times U_2\to U$
for some open neighbourhood $U_1$ of $f(x)$ in~$N$ and some $C^r_\K$-manifold $U_2$,
such that $f\circ\theta=\pr_1\colon U_1\times U_2\to U_1$, $(x_1,x_2)\mto x_1$
(by Lemma~\ref{locapro}).
Applying now Lemma~\ref{locapro} to $g$, we see that,
after shrinking $U_1$, we may assume that there exists
an open neighbourhood $V_1\sub K$ of $g(f(x))$ and a $C^r_\K$-diffeomorphism
$\zeta\colon V_1\times V_2\to U_1$
such that $g\circ \zeta=\pi\colon V_1\times V_2\to V_1$, $(x_1,x_2)\mto x_1$.
Consider the projections $p\colon V_1\times V_2\times U_3\to V_1\times V_2$, $(x_1,x_2,x_3)\mto (x_1,x_2)$
and $q\colon V_1\times V_2\times U_3\to V_1$, $(x_1,x_2,x_3)\mto x_1$.
Then $\eta:=\theta\circ (\zeta\times \id_{U_2})\colon V_1\times V_2\times U_2\to U$
is a $C^r_\K$-diffeomorphism such that $g\circ f\circ \eta=g\circ f\circ \theta\circ (\zeta\times \id_{U_2})
=g\circ\pr_1\circ (\zeta\times \id_{U_2})=g\circ \zeta\circ p=\pi\circ p=q$.
Hence $g\circ f$ is a $C^r_\K$-submersion, by Lemma~\ref{locapro}.
\end{proof}
\begin{la}\label{subinsub}
Let $M$ be a $C^r_\K$-manifold,
$N$ be a split $C^r_\K$-submanifold of $M$ and
$K$ be a $C^r_\K$-submanifold $($resp., a split $C^r_\K$-submanifold$)$ of~$N$.
Then $K$ is a $C^r_\K$-submanifold $($resp., a split $C^r_\K$-submanifold$)$ of~$M$.
\end{la}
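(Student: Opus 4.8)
The plan is to reduce everything to the two standard facts that ``closed'' and ``complemented'' are transitive, together with a single chart straightening. Write $E$ for the modelling space of $M$. Since $N$ is a split $C^r_\K$-submanifold of $M$, it is modelled on a closed, complemented subspace $F\sub E$, say $E=F\oplus C$ as topological vector spaces. Since $K$ is a $C^r_\K$-submanifold of $N$, it is modelled on a closed vector subspace $G\sub F$; as $F$ is closed in $E$, the subspace $G$ is closed in $E$ as well, and this is the subspace on which I will model $K$ inside $M$. In the split case $G$ is complemented in $F$, say $F=G\oplus D$, whence $E=G\oplus(D\oplus C)$ and $G$ is complemented in $E$. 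Thus it suffices to produce, for each $x\in K$, a chart of $M$ around $x$ that carries a neighbourhood of $x$ in $K$ onto the trace of $G$.

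Fix $x\in K$. First I would choose a chart $\phi\colon U_\phi\to V_\phi\sub E$ of $M$ with $x\in U_\phi$ and $\phi(U_\phi\cap N)=V_\phi\cap F$ (as $N$ is a submanifold of $M$); then $\phi_N:=\phi|_{U_\phi\cap N}\colon U_\phi\cap N\to V_\phi\cap F$ is a chart of $N$ in its maximal atlas. Next I would choose a chart $\chi\colon U_\chi\to V_\chi\sub F$ of $N$ with $x\in U_\chi$ and $\chi(U_\chi\cap K)=V_\chi\cap G$ (as $K$ is a submanifold of $N$). After shrinking $U_\phi$ I may assume $U_\phi\cap N\sub U_\chi$, and after shrinking further I may assume $V_\phi=P\times Q$ is a product box with $P\sub F$ and $Q\sub C$ open (possible since $\phi(x)\in V_\phi\cap F$ has vanishing $C$-component), so that $V_\phi\cap F$ is identified with $P$. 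The transition map $\tau:=\chi\circ\phi_N^{-1}\colon P\to\tau(P)\sub F$ is then a $C^r_\K$-diffeomorphism onto an open subset of $F$, because $\phi_N$ and $\chi$ both belong to the maximal atlas of $N$.

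The key move is to \emph{thicken} $\tau$ to a diffeomorphism of open subsets of $E$ acting only in the $F$-direction: set $\Theta\colon P\times Q\to\tau(P)\times Q$, $\Theta(u,w):=(\tau(u),w)$, which is a $C^r_\K$-diffeomorphism with inverse $(u',w)\mto(\tau^{-1}(u'),w)$. Then $\wt\phi:=\Theta\circ\phi\colon U_\phi\to\tau(P)\times Q$ is a chart of $M$ around $x$ (compatible with the atlas of $M$ via the transition $\Theta$). It remains to verify $\wt\phi(U_\phi\cap K)=(\tau(P)\times Q)\cap G$. For $y\in U_\phi\cap K\sub U_\phi\cap N$ one has $\phi(y)=(u,0)$ with $\tau(u)=\chi(y)\in V_\chi\cap G$, so $\wt\phi(y)=(\tau(u),0)\in G$; conversely, a point $(u',w)$ of $(\tau(P)\times Q)\cap G$ has $w=0$ and $u'=\chi(y)$ for a unique $y\in U_\phi\cap N$ with $\chi(y)\in V_\chi\cap G=\chi(U_\chi\cap K)$, so the injectivity of $\chi$ forces $y\in K$ and $\wt\phi(y)=(u',0)$. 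This exhibits $K$ as a $C^r_\K$-submanifold of $M$ modelled on $G$, and split precisely when $G$ is complemented in $E$ as arranged above.

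The arguments are all elementary; the only point requiring genuine care is the \textbf{bookkeeping of the shrinkings}, namely arranging $U_\phi\cap N\sub U_\chi$ and turning $V_\phi$ into a product box so that $\tau$ is defined exactly on the $F$-slice $P$ and can be thickened by the identity in the $C$-direction. Everything else (closedness of $G$ in $E$, transitivity of complementation, and the two inclusions verified via injectivity of $\chi$) is routine. Since plain $C^r_\K$-submanifolds are pure, no component-by-component discussion is needed.
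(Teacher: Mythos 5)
Your proof is correct and follows essentially the same route as the paper's: both straighten $N$ in a product chart $V_\phi=P\times Q$ of $M$, thicken the transition map $\chi\circ\phi_N^{-1}$ (the paper's $\psi\circ\phi^{-1}|_P$) by the identity in the complementary direction to get a new chart of $M$, and then verify the two inclusions and settle the split case via $E=G\oplus(D\oplus C)$. No gaps.
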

\begin{proof}
Let $E$ be the modelling space of~$M$, $F\sub E$ be the modelling space of~$N$ and $Y\sub F$
be the modelling space of~$K$. By hypothesis, $E=F\oplus C$
as a topological vector space for some vector subspace $C\sub E$.
Given $x\in K$, let $\phi\colon U_\phi\to V_\phi\sub E$
be a chart of~$M$ such that $x\in U_\phi$ and $\phi(N\cap U_\phi)=F\cap V_\phi$.
After shrinking $V_\phi$, we may assume that $V_\phi=P\times Q$ with open subsets $P\sub F$ and $Q\sub C$.
Next, we choose a chart $\psi\colon U_\psi\to V_\psi\sub F$ of~$N$
such that $\psi(K\cap U_\psi)=Y\cap V_\psi$.
After shrinking $U_\psi$ and $P$ if necessary, we may assume that $U\cap N=\phi^{-1}(P)=U_\psi$.
Then $\theta:=((\psi\circ \phi^{-1}|_P)\times \id_Q)\circ \phi$ is a $C^r_\K$-diffeomorphism
$U\to P\times Q\sub E$ and hence a chart of $M$.
If $x\in U\cap K$, then $x\in U\cap N$ in particular
and hence $\phi(x)\in P$, entailing that $\theta(x)=\psi(x)$.
Now $x\in U\cap K=(U\cap N)\cap K=U_\psi\cap K$ entails $\theta(x)=\psi(x)\in Y$.
Hence $\theta(U\cap K)\sub (P\times Q)\cap Y$.
If conversely, $x\in U$ such that $\theta(x)\in (P\times Q)\cap Y$,
then $\theta(x)\in P\cap Y\sub P$ and thus $\phi(x)\in P$, entailing that $x\in N$.
Hence $\theta(x)=\psi(x)$ and as this is in~$Y$, we have $x\in K$.
Thus $\theta(K\cap U)=Y\cap (P\times Q)$,
showing that $K$ is a $C^r_\K$-submanifold of~$M$.
If $K$ is a split submanifold of $N$, then $F=Y\oplus D$
as a topological vector space for some vector subspace $D\sub F$
and hence $E=F\oplus C= Y\oplus (D\oplus C)$.
Thus $Y$ is complemented in~$E$ and so $K$ is a split $C^r_\K$-submanifold of~$M$.
\end{proof}
\begin{la}\label{pureatx}
Let $M$ be a $C^r_\K$-manifold modelled on a locally convex space~$E$
and $N\sub M$ be a subset.
Assume that, for each $x\in N$,
there exists a closed vector subspace $F_x\sub E$
and a $C^r_\K$-diffeomorphism $\phi_x\colon U_x\to V_x$
from an open neighbourhood $U_x$ of~$x$ in~$M$ onto an open subset $V_x\sub E$
such that $\phi_x(N\cap U_x)=F_x\cap V_x$.
Then $N$ is a $($not necessarily pure$)$ $C^r_\K$-submanifold of~$M$.
If each $F_x$ can be chosen as a complemented vector subspace of $E$,
then $N$ is a $($not necessarily pure$)$ split $C^r_\K$-submanifold of~$M$. 
\end{la}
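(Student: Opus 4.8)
The plan is to verify the two defining properties of a (not necessarily pure) submanifold in turn: first that the connected components of $N$ are open, and then that each component is a \emph{pure} $C^r_\K$-submanifold modelled on a single closed subspace $F_C\sub E$. Throughout I use that each $\phi_x$ is a chart of $M$ belonging to the maximal atlas, since it is a $C^r_\K$-diffeomorphism onto an open subset of the modelling space $E$ itself.

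For the first property I observe that $N$ is locally connected: every $x\in N$ has the neighbourhood $N\cap U_x$, which $\phi_x$ maps homeomorphically onto the open subset $F_x\cap V_x$ of the locally convex space $F_x$. As locally convex spaces have a base of convex (hence connected) open sets, open subsets of $F_x$ are locally connected, and thus so is $N$. A locally connected space has open connected components, which is the first requirement. Note also that, a component $C$ being open in $N$, every chart below may be shrunk so that $U_\phi\cap N=U_\phi\cap C$.

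The heart of the argument is a transition lemma. Given two of the charts, say $\phi\colon U\to V$ with $\phi(U\cap N)=V\cap F$ and $\psi\colon U'\to V'$ with $\psi(U'\cap N)=V'\cap F'$ for closed subspaces $F,F'\sub E$, and a point $p\in U\cap U'\cap N$, I claim there is a continuous linear automorphism $\alpha\in\GL(E)$ with $\alpha(F)=F'$. Indeed, the transition map $g:=\psi\circ\phi^{-1}$ is a $C^r_\K$-diffeomorphism between open subsets of $E$ which restricts to a bijection of $F\cap\phi(U\cap U')=\phi(U\cap U'\cap N)$ onto $F'\cap\psi(U\cap U')=\psi(U\cap U'\cap N)$. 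Setting $\alpha:=g'(\phi(p))\in\GL(E)$ and differentiating, for $v\in F$, the curves $t\mapsto g(\phi(p)+tv)$, which lie in the closed subspace $F'$, gives $\alpha(F)\sub F'$; applying the same to $g^{-1}$ at $\psi(p)$ yields $\alpha^{-1}(F')\sub F$, whence $\alpha(F)=F'$. I expect this to be the main obstacle, since it is exactly the point where closedness of the subspaces is used to keep the curve derivatives inside the slice.

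With the transition lemma in hand, fix a connected component $C$ of $N$, a base point $x_0\in C$, and set $F_C:=F_{x_0}$. Let $G\sub C$ be the set of points admitting a chart $\phi$ of $M$ with $\phi(U_\phi\cap N)=V_\phi\cap F_C$. Then $x_0\in G$, so $G\neq\emptyset$. If $y\in G$ via a chart $\phi$, the same chart witnesses membership for every $z\in U_\phi\cap C$, so $U_\phi\cap C\sub G$ and $G$ is open. Conversely, if $y\in C$ admits some $z\in U_y\cap C\cap G$, then applying the transition lemma to $\phi_y$ and a chart at $z$ modelled on $F_C$ produces $\alpha\in\GL(E)$ with $\alpha(F_y)=F_C$; then $\alpha\circ\phi_y$ is a chart at $y$ with $(\alpha\circ\phi_y)(U_y\cap N)=F_C\cap\alpha(V_y)$, so $y\in G$. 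Hence $C\setminus G$ is open as well. Since $C$ is connected and $G$ is a nonempty clopen subset, $G=C$, so $C$ is a pure $C^r_\K$-submanifold modelled on $F_C$. As this holds for every component, $N$ is a (not necessarily pure) $C^r_\K$-submanifold. Finally, if the subspaces $F_x$ may be chosen complemented in $E$, then each base-point space $F_C=F_{x_0}$ is complemented, so each $C$, and hence $N$, is split.
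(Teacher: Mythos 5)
Your proof is correct and follows essentially the same route as the paper: openness of the connected components of $N$ via the local homeomorphisms onto the open sets $F_x\cap V_x$, and constancy of the modelling space along a component via a continuous linear automorphism $\alpha$ of $E$ carrying one slice subspace onto another. In fact you supply a detail the paper only states as a claim, namely that $\alpha$ may be taken to be the derivative of the transition map at a common point of $N$ (using closedness of the $F_x$ to keep the curve derivatives in the slice), and your clopen argument within a fixed component replaces the paper's chain equivalence relation.
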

\begin{proof}
If $x,y\in N$, write $x\sim y$ if there exist $n\in \N$ and $x_1,\ldots, x_n\in N$
such that $x\in U_{x_1}$, $y\in U_{x_n}$
and $U_{x_j}\cap U_{x_{j+1}}\not=\emptyset$
for all $j\in \{1,\ldots, n-1\}$.
It is clear that $\sim$ is an equivalence relation on~$N$.
Write $[x]$ for the equivalence class of $x\in N$.
Since $y\sim z$ for all $z\in U_y$, we have $U_y\sub [x]$ for all $y\in [x]$
and hence $[x]$ is open.
Hence $[x]$ is open and closed, entailing that $[x]$ contains the connected component
$N_y$ of $N$ containing~$y$ for each $y\in [x]$.
As the open neighbourhood $U_x\cap N$ of $x\in N$ in~$N$ is homeomorphic to the open set $V_x\cap F_x$ in the locally
convex space $F_x$, we see that all connected components of~$N$ are open in~$N$.
We claim that, if $y\in [x]$,
then there exists an automorphism $\alpha\colon E\to E$
of the topological vector space~$E$ such that $\alpha(F_x)=F_y$.
We may therefore replace $F_y$ with $F_x$ for each $y\in [x]$
and conclude that $N$ is a (not necessarily pure) $C^r_\K$-submanifold of~$M$.
Since $F_y=\alpha(F_x)$ is complemented in~$E$ if $F_x$ is complemented in~$E$,
we see that $N$ is a (not necessarily pure) split $C^r_\K$-submanifold of~$M$
whenever $F_x$ can be chosen complemented in~$E$ for each $x\in N$.
\end{proof}
\begin{la}
If $f_j\colon M_j\to N_j$ is a $C^r_\K$-submersion between $C^r_\K$-manifolds
for~$j\in \{1,2\}$, then also
$f_1\times f_2\colon M_1\times M_2\to N_1\times N_2$ is a $C^r_\K$-submersion.
\end{la}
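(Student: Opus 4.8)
The plan is to verify the definition of a $C^r_\K$-submersion directly, by taking products of the charts supplied by the hypothesis on each factor. The key observation is that the class of continuous linear maps admitting a continuous linear right inverse is closed under forming products, so the local normal form characterizing a submersion is preserved under $\times$.

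First I would fix an arbitrary point $(x_1,x_2)\in M_1\times M_2$. Applying the definition of a $C^r_\K$-submersion to each $f_j$ at $x_j$, I obtain charts $\phi_j\colon U_{\phi_j}\to V_{\phi_j}\sub E_j$ of $M_j$ with $x_j\in U_{\phi_j}$ and charts $\psi_j\colon U_{\psi_j}\to V_{\psi_j}\sub F_j$ of $N_j$ with $f_j(x_j)\in U_{\psi_j}$ such that $f_j(U_{\phi_j})\sub U_{\psi_j}$ and $\psi_j\circ f_j\circ\phi_j^{-1}=\pi_j|_{V_{\phi_j}}$ for a continuous linear map $\pi_j\colon E_j\to F_j$ admitting a continuous linear right inverse $\sigma_j\colon F_j\to E_j$, where $E_j$ is the modelling space of $M_j$ and $F_j$ that of $N_j$.

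Next I would assemble the product charts. Here $E_1\times E_2$ is the modelling space of $M_1\times M_2$ and $F_1\times F_2$ that of $N_1\times N_2$; the map $\phi_1\times\phi_2\colon U_{\phi_1}\times U_{\phi_2}\to V_{\phi_1}\times V_{\phi_2}$ is a chart of $M_1\times M_2$ around $(x_1,x_2)$, and $\psi_1\times\psi_2$ is a chart of $N_1\times N_2$ around $(f_1(x_1),f_2(x_2))$. Since $(f_1\times f_2)(U_{\phi_1}\times U_{\phi_2})\sub U_{\psi_1}\times U_{\psi_2}$, I compute
\[
(\psi_1\times\psi_2)\circ(f_1\times f_2)\circ(\phi_1\times\phi_2)^{-1}
=(\pi_1\times\pi_2)|_{V_{\phi_1}\times V_{\phi_2}},
\]
where $\pi_1\times\pi_2\colon E_1\times E_2\to F_1\times F_2$ is continuous and linear. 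It remains to observe that $\sigma_1\times\sigma_2\colon F_1\times F_2\to E_1\times E_2$ is a continuous linear right inverse, as $(\pi_1\times\pi_2)\circ(\sigma_1\times\sigma_2)=(\pi_1\circ\sigma_1)\times(\pi_2\circ\sigma_2)=\id_{F_1}\times\id_{F_2}=\id_{F_1\times F_2}$. This exhibits the required local normal form at the arbitrary point $(x_1,x_2)$, so $f_1\times f_2$ is a $C^r_\K$-submersion.

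There is no real obstacle here; the argument is a routine product computation, and the only points to keep straight are the elementary facts that the product of two manifold charts is a chart for the product manifold and that products distribute over composition. Alternatively, one could deduce the result from Lemma~\ref{locapro}, factoring each $f_j$ locally as a projection $\pr_1$ and then reorganizing the four resulting factors; but this needs an extra transposition of factors and is less transparent than working directly from the definition.
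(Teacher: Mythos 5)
Your proposal is correct and follows essentially the same route as the paper's own proof: form the product charts $\phi_1\times\phi_2$ and $\psi_1\times\psi_2$, observe that the local representative is $(\pi_1\times\pi_2)|_{V_{\phi_1}\times V_{\phi_2}}$, and note that $\sigma_1\times\sigma_2$ is a continuous linear right inverse of $\pi_1\times\pi_2$. Nothing is missing.
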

\begin{proof}
If $x=(x_1,x_2)\in M_1\times M_2$, we find charts $\phi_j\colon U_j\to V_j\sub E_j$ for $M_j$ with $x_j\in U_j$
and charts $\psi_j\colon P_j\to Q_j\sub F_j$ for $N_j$ such that $f_j(U_j)\sub P_j$
and $\psi_j\circ f_j\circ \phi_j^{-1}=\pi_j|_{V_j}$ for a continuous linear map $\pi_j\colon E_j\to F_j$
admitting a continuous linear right inverse $\sigma_j\colon F_j\to E_j$
for $j\in \{1,2\}$.
Then $\pi_1\times \pi_2$ is continuous linear with $\sigma_1\times \sigma_2$
as a continuous linear right inverse; moreover,
$\phi_1\times \phi_2$ is a chart for $M_1\times M_2$ with $x=(x_1,x_2)$
in its domain and $(\psi_1\times \psi_2)\circ (f_1\times f_2)\circ (\phi_1\times \phi_2)^{-1}=(\pi_1\times \pi_2)|_{U_1\times U_2}$.
Thus $f_1\times f_2$ is a $C^r_\K$-submersion.
\end{proof}
\begin{la}\label{propsec}
If $f\colon M\to N$ is a $C^r_\K$-submersion between $C^r_\K$-manifolds,
then $f$ is an open map. Moreover, for each $x\in M$
there exists an open neighbourhood $U\sub N$ and a $C^r_\K$-map $\tau\colon U\to M$
such that $f\circ \tau=\id_U$.
\end{la}
\begin{proof}
For each $x\in M$, we find a chart $\phi\colon U_\phi\to V_\phi\sub E$ of $M$ with $x\in U_\phi$
and a chart $\psi\colon U_\psi\to V_\psi\sub F$ of~$N$
such that $f(U_\phi)\sub U_\psi$ and $\psi\circ f\circ \phi^{-1}=\pi|_{V_\phi}$
for a continuous linear map $\pi\colon E\to F$ admitting a continuous linear right inverse
$\sigma\colon F\to E$.
After replacing $\phi$ with $\phi-\phi(x)$
and $\psi$ with $\psi-\pi(\phi(x))$,
we may assume that $\phi(x)=0$ and $0=\pi(\phi(x))=\psi(f(x))\in V_\psi$
As a consequence, there exists a $0$-neighbourhood $P\sub V_\psi$ such that
$\sigma(P)\sub V_\phi$. Then $\tau\colon \psi^{-1}(P)\to M$, $\tau:=\phi^{-1}\circ \sigma\circ \psi|_{\psi^{-1}(P)}$
is a $C^r_\K$-map such that $\tau(f(x))=\phi^{-1}(\sigma(0)))=\phi^{-1}(0)=x$
and $f\circ \tau=f\circ \phi^{-1}\circ \sigma\circ \psi|_{\psi^{-1}(P)}=\psi^{-1}\circ\psi\circ f\circ \phi^{-1}\circ \sigma\circ
\psi|_{\psi^{-1}(P)}=\psi^{-1}\circ \pi\circ \sigma\circ \psi|_{\psi^{-1}(P)}=\id_{\psi^{-1}(P)}$.
Thus $f\colon M\to N$ admits local $C^r_\K$-sections. If $W\sub M$ is open and
$\sigma_x\colon Q_x\to M$ for $x\in W$ a $C^r_\K$-map on an open set $Q_x\sub N$ with $f(x)\in Q_x$
and $f\circ \sigma_x=\id_{Q_x}$,
then $\sigma_x^{-1}(W)$ is an open neighbourhood of $f(x)$ in $Q_x$ (hence in $N$)
and $f(W)\supseteq f(\sigma_x(\sigma_x^{-1}(W)))=\sigma_x^{-1}(W)$.
This $f(W)$ is a neighbourhood of each point $f(x)$ in $f(W)$ in~$N$
and hence $f(W)$ is open.
\end{proof}
\begin{la}\label{checkdiff}
Let $q\colon M\to N$ be a surjective $C^r_\K$-submersion between
$C^r_\K$-manifolds and $f\colon N\to K$ be a map to a $C^r_\K$-manifold $K$.
Then $f$ is $C^r_\K$ if and only if $f\circ q\colon M\to K$ is $C^r_\K$.
\end{la}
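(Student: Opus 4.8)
The plan is to prove the two implications separately. The forward direction is immediate: if $f$ is $C^r_\K$, then $f\circ q$ is a composition of the $C^r_\K$-maps $q$ and $f$, hence $C^r_\K$. So the substance lies in the converse.

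For the converse I would assume $f\circ q$ is $C^r_\K$ and exploit the local sections furnished by Lemma~\ref{propsec}. Since being $C^r_\K$ is a local property of a map between manifolds (a map is $C^r_\K$ iff it is so in local charts), it suffices to show that every point $y\in N$ has an open neighbourhood on which $f$ is $C^r_\K$. First I would use surjectivity of $q$ to choose $x\in M$ with $q(x)=y$. Applying Lemma~\ref{propsec} to the submersion $q$ at this $x$ yields an open neighbourhood $U\sub N$ together with a $C^r_\K$-map $\tau\colon U\to M$ satisfying $q\circ\tau=\id_U$; the proof of that lemma moreover gives $\tau(q(x))=x$, so that $y=q(x)\in U$ as required.

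The key observation is then that on $U$ we have
\[
f|_U=f\circ\id_U=f\circ(q\circ\tau)=(f\circ q)\circ\tau ,
\]
which exhibits $f|_U$ as a composition of the $C^r_\K$-map $f\circ q$ (by hypothesis) with the $C^r_\K$-section $\tau$, hence $f|_U$ is $C^r_\K$. As $y\in N$ was arbitrary and $q$ is surjective, the neighbourhoods $U$ obtained in this way cover all of~$N$, so $f$ is $C^r_\K$ on a neighbourhood of every point and therefore $C^r_\K$.

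I do not expect a genuine obstacle: once Lemma~\ref{propsec} supplies the local sections, the argument is a routine localisation. The only points deserving care are that the section $\tau$ is defined on a neighbourhood of the \emph{correct} point $y$ (which follows from $\tau(q(x))=x$, i.e.\ $q(\tau(y))=y$), and that it is precisely the surjectivity hypothesis which guarantees these neighbourhoods exhaust $N$ rather than merely $q(M)$.
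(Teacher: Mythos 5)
Your proposal is correct and follows essentially the same route as the paper: the forward direction by composition, and the converse by covering $N$ with the domains of local $C^r_\K$-sections from Lemma~\ref{propsec} and writing $f|_U=(f\circ q)\circ\tau$. No gaps.
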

\begin{proof}
If $f$ is $C^r_\K$, then also $f\circ q$ is $C^r_\K$ (being a composition of $C^r_\K$-maps).
Assume, conversely, that $f\circ q$ is $C^r_\K$.
For each $y\in N$, we find $x\in M$ such that $q(x)=y$
and (with Lemma~\ref{propsec}) a $C^r_\K$-map $\sigma\colon P\to M$
on an open neighbourhood $P$ of $q(x)=y$ in~$N$
such that $\sigma(y)=x$ and $q\circ \sigma=\id_P$.
Thus $f|_P=f|_P \circ \id_P=(f\circ q)\circ \sigma$ is $C^r_\K$.
As the sets $P$ just obtained cover $N$ for $y$ ranging through $N$,
we see that $f$ is $C^r_\K$.
\end{proof}
\begin{la}\label{uniquesub}
Let $M$ be a $C^r_\K$-manifold and $q\colon M\to N$ be a surjective map.
If $N$ can be made a $C^r_\K$-manifold $N_j$ in such a way that $q\colon M\to N_j$
is a $C^r_\K$-submersion for $j\in \{1,2\}$, then $\id_N\colon N_1\to N_2$
is a $C^\infty_\K$-diffeomorphism.
In particular, if both $N_1$ and $N_2$ are modelled on the same locally convex space~$F$,
then $N_1=N_2$.
\end{la}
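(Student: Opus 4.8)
The plan is to derive the whole statement from Lemma~\ref{checkdiff}, which is exactly designed to test differentiability of a map out of the base of a surjective submersion by pre-composing with that submersion. The starting observation is the trivial identity $\id_N\circ q=q$ of set maps $M\to N$.

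First I would apply Lemma~\ref{checkdiff} to the surjective $C^r_\K$-submersion $q\colon M\to N_1$ and to the map $f:=\id_N\colon N_1\to N_2$. The only thing to verify is that $f\circ q=q\colon M\to N_2$ is $C^r_\K$; but this is immediate, since $q\colon M\to N_2$ is by hypothesis a $C^r_\K$-submersion and hence in particular a $C^r_\K$-map. Lemma~\ref{checkdiff} therefore gives that $\id_N\colon N_1\to N_2$ is $C^r_\K$. Interchanging the roles of $N_1$ and $N_2$ and repeating the argument for the submersion $q\colon M\to N_2$ shows that $\id_N\colon N_2\to N_1$ is $C^r_\K$ as well. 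As these two maps are mutually inverse, $\id_N\colon N_1\to N_2$ is a $C^r_\K$-diffeomorphism, which is the asserted diffeomorphism.

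For the final assertion I would assume that $N_1$ and $N_2$ are modelled on the same locally convex space~$F$ and argue that their maximal $C^r_\K$-atlases coincide. Let $\psi_1$ be an arbitrary chart of $N_1$ and $\psi_2$ an arbitrary chart of~$N_2$; by hypothesis both are $F$-valued, so they are directly comparable. Because $\id_N$ is $C^r_\K$ in both directions, the transition $\psi_2\circ\psi_1^{-1}=\psi_2\circ\id_N\circ\psi_1^{-1}$ and its inverse $\psi_1\circ\id_N\circ\psi_2^{-1}$ are $C^r_\K$ on the relevant open subsets of~$F$. Hence $\psi_1$ is $C^r_\K$-compatible with every chart of~$N_2$ and thus lies in the maximal atlas of~$N_2$; by symmetry every chart of~$N_2$ lies in the maximal atlas of~$N_1$. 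The two maximal atlases therefore agree, so $N_1=N_2$.

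The differentiability input is handed over essentially for free by Lemma~\ref{checkdiff}, so the proof presents no serious obstacle; the only point demanding care is the concluding equality. There the comparison of modelling spaces is essential: equality of modelling spaces is precisely what makes the $F$-valued charts of the two structures literally interchangeable, and without it one can only conclude that the two structures agree up to the identity diffeomorphism (the charts being valued in different, merely isomorphic, spaces). I would be sure to invoke Lemma~\ref{checkdiff} symmetrically in both directions, as using it only once yields $C^r_\K$-ness of $\id_N$ but not of its inverse.
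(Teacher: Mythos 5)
Your proposal is correct and follows essentially the same route as the paper: both apply Lemma~\ref{checkdiff} to the identity $\id_N\circ q_1=q_2$ in each direction to get that $\id_N$ is a $C^r_\K$-diffeomorphism, and then conclude equality of the maximal atlases when the modelling spaces coincide. Your extra care in spelling out the chart-compatibility argument and in noting why equality of modelling spaces is needed only elaborates what the paper leaves implicit.
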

\begin{proof}
Write $q_j$ for $q$, considered as a $C^r_\K$-submersion to $N_j$.
Since $\id_N\circ q_1=q_2$ is $C^r_\K$ and $q_1$ is a surjective
$C^r_\K$-submersion, Lemma~\ref{checkdiff} shows that
$\id\colon N_1\to N_2$ is $C^r_\K$.
Likewise, $\id\colon N_2\to N_1$ is $C^r_\K$.
Thus $\id\colon N_1\to N_2$ is a $C^r_\K$-diffeomorphism.
If $N_1$ and $N_2$ are modelled on the same space $F$,
this means that both have the same atlas, whence they
coincide as $C^r_\K$-manifolds.
\end{proof}
\begin{la}\label{subatx}
Let $f\colon M\to N$ be a $C^r_\K$-map between $C^r_\K$-manifolds
and $x_0\in M$ be a point such that the continuous linear map $T_{x_0}f\colon T_{x_0}M\to T_{f(x_0)}N$
has a continuous linear right inverse.
If $N$ is finite-dimensional or $N$ is a Banach manifold and $r\geq 2$,
then $x_0$ has an open neighbourhood $U\sub M$ such that $f|_U$ is a $C^r_\K$-submersion.
\end{la}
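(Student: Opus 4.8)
The plan is to reduce the whole statement to a single application of the Inverse Function Theorem with Parameters (\ref{invpara}), whose hypotheses are matched exactly by the two admissible cases here ($N$ finite-dimensional, or $N$ Banach with $r\geq 2$). First I would pass to charts: choose a chart $\phi_0\colon U_{\phi_0}\to V_{\phi_0}\sub E$ of $M$ with $x_0\in U_{\phi_0}$ and $\phi_0(x_0)=0$, and a chart $\psi_0$ of $N$ with $\psi_0(f(x_0))=0$ and, after shrinking $U_{\phi_0}$, $f(U_{\phi_0})\sub U_{\psi_0}$. Writing $\wt{f}:=\psi_0\circ f\circ\phi_0^{-1}\colon V_{\phi_0}\to F$ for the local representative, we have $\wt{f}(0)=0$ and $\wt{f}'(0)=\pi$, where $\pi\colon E\to F$ corresponds to $T_{x_0}f$ under the chart identifications and hence admits a continuous linear right inverse $\sigma\colon F\to E$.

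The next step is purely linear-algebraic. From $\pi\circ\sigma=\id_F$ the map $P:=\sigma\circ\pi$ is a continuous idempotent with $\im(P)=\sigma(F)$ and $\ker(P)=\ker(\pi)$, so $E=\ker(\pi)\oplus\sigma(F)$ as topological vector spaces. Identifying $\sigma(F)$ with $F$ via the isomorphism $\pi|_{\sigma(F)}$ (inverse $\sigma$), I would write $E=E_1\times F$ with $E_1:=\ker(\pi)$, arranged so that $\pi(u,v)=v$ and $\sigma(v)=(0,v)$. Then $\wt{f}'(0)(h,k)=k$, so the partial derivative of $\wt{f}$ in the $F$-direction at $0$ is $\id_F$ (invertible) while the $E_1$-partial vanishes.

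Now I would apply \ref{invpara} with parameter space $E_1$, Banach space $F$, and the map $g:=\wt{f}$ restricted to a product neighbourhood of $0$ inside $V_{\phi_0}$: since $g(0,\cdot)'(0)=\id_F$ is invertible and $F$ is Banach — with $r\geq 2$ exactly when $\dim F=\infty$, which is the standing hypothesis — after shrinking to neighbourhoods $U_1\sub E_1$ and $V_1\sub F$ of $0$ the map
\[
\theta\colon U_1\times V_1\to\Omega,\qquad \theta(u,v)=(u,g(u,v)),
\]
is a $C^r_\K$-diffeomorphism onto an open set $\Omega\sub U_1\times F\sub E$. The key observation is that $\wt{f}=\pi_2\circ\theta$ on $U_1\times V_1$, where $\pi_2\colon E_1\times F\to F$ is the projection; in the chosen coordinates $\pi_2=\pi$, which is continuous linear with continuous linear right inverse $w\mapsto(0,w)$. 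Hence $\wt{f}\circ\theta^{-1}=\pi|_\Omega$.

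Finally I would translate back. Setting $U:=\phi_0^{-1}(U_1\times V_1)$, the composite $\phi:=\theta\circ(\phi_0|_U)\colon U\to\Omega$ is a chart of $M$ with $\phi(U)=\Omega$, and with $\psi:=\psi_0$ one computes $\psi\circ(f|_U)\circ\phi^{-1}=\wt{f}\circ\theta^{-1}=\pi|_\Omega$, while $f(U)\sub f(U_{\phi_0})\sub U_\psi$. Since this one pair of charts $(\phi,\psi)$ exhibits $f$ as the restriction of the continuous linear map $\pi$ (with right inverse $\sigma$) simultaneously at every point of $U$, the existential requirement in the definition of a $C^r_\K$-submersion is met at each $x\in U$, so $f|_U$ is a $C^r_\K$-submersion. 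I expect the only real content to be the invocation of \ref{invpara}; everything else is bookkeeping with charts and the splitting, and the case distinction on $N$ enters solely to license that invocation.
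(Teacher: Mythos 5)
Your proposal is correct and follows essentially the same route as the paper: split $E=\ker(\pi)\oplus\sigma(F)$ using the right inverse, apply the Inverse Function Theorem with Parameters \ref{invpara} to the local representative, and read off that $f$ becomes the linear projection in the resulting chart $\theta\circ\phi_0$. If anything, your final step is slightly more explicit than the paper's (which ends by verifying only the pointwise right-inverse condition), since you exhibit the straightening chart directly; the mathematical content is identical.
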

\begin{proof}
As the assertion is local, we
may assume that $M$ and $N$ are open subsets of their modelling space~$E$
and $F$, respectively. Let $\sigma\colon F\to E$ be a continuous linear right inverse
to $f'(x_0)$. Then $E=\ker f'(x_0)\oplus\sigma(F)$ as a topological
vector space and $\alpha:=f'(x_0)|_{\sigma(F)}\colon \sigma(F)\to F$
is an isomorphism of topological vector spaces.
After replacing $f$ with $\alpha^{-1}\circ f$, we may assume that $F=\sigma(F)$ and
that $f'(x_0)$ is the projection $\pr_2\colon \ker f'(x_0)\oplus F\to F$.
Thus $f'(x_0)|_F\colon F\to F$ is invertible.
After shrinking the open $X_0$-neighbourhood $M_1$, we may assume that $M_1=U\times V$
with open subsets $U\sub \ker(f'(x_0))$ and $V\sub F$.
By the Inverse Function Theorem with Parameters (see \ref{invpara}),
after shrinking $U$ and $V$ we may assume that $f(x,.)\colon V\to F$
is a $C^r_\K$-diffeomorphism onto an open subset of~$F$ for each $x\in U$.
As a consequence, $f(x,.)'(y)=f'(x,y)|_F$ is an isomorphism
of topological vector spaces for all $(x,y)\in U\times V=M_1$.
Holding $(x,y)$ fixed, abbreviate $\beta:=f'(x,y)|_F$.
Then the continuous linear map $\beta^{-1}\colon F\to F\sub E$
is a right inverse to $f'(x,y)$ as $f'(x,y)\circ \beta=f'(x,y)|_F\circ\beta=\id_F$.
Thus $f$ is a $C^r_\K$-submersion.
\end{proof}
\begin{la}\label{alwayszero}
Let $N$ be a $C^r_\K$-submanifold of finite co-dimension $k$ in a $C^r_\K$-manifold~$M$.
Then, for each $x\in N$, there exists an open neighbourhood $U$ of $x$ in $M$
and a $C^r_\K$-submersion $g\colon U\to\R^k$ such that
$N\cap U=g^{-1}(\{0\})$.
\end{la}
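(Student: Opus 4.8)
The plan is to reduce everything to a single chart and realise $g$ as the composition of that chart with a continuous linear projection onto a finite-dimensional complement. Since the assertion is purely local and concerns a fixed point $x\in N$, I would begin by unwinding the definition of a $C^r_\K$-submanifold of finite codimension $k$: there is a chart $\phi\colon U_\phi\to V_\phi\sub E$ of $M$ with $x\in U_\phi$ and a closed vector subspace $F\sub E$ with $\dim_\K(E/F)=k$ such that $\phi(N\cap U_\phi)=V_\phi\cap F$. All the work will now take place in $E$.

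Next I would manufacture the projection. Because $F$ is closed of finite codimension $k$, it is complemented in $E$ (as recorded in Remark~(b) of the Introduction), so $E=F\oplus C$ as a topological vector space for a finite-dimensional complement $C$ with $\dim_\K C=k$. The projection $\pi_0\colon E\to C$ along $F$ is then continuous and linear, its continuity being exactly part of the assertion that the addition map $F\times C\to E$ is an isomorphism of topological vector spaces. Fixing a $\K$-linear isomorphism $C\isom \K^k$ (which is automatically a homeomorphism, $C$ being finite-dimensional) and composing, I obtain a continuous linear map $\pi\colon E\to \K^k$ with $\ker\pi=F$. It admits a continuous linear right inverse $\sigma\colon \K^k\to E$, namely the inclusion $C\emb E$ precomposed with the inverse isomorphism $\K^k\isom C$; this $\sigma$ is continuous because every linear map out of a finite-dimensional space is continuous.

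Then I would set $g:=\pi\circ\phi\colon U_\phi\to \K^k$ and take $U:=U_\phi$. That $g$ is a $C^r_\K$-submersion follows straight from the definition: using the chart $\phi$ on $M$ and the identity chart on $\K^k$ one has $g\circ\phi^{-1}=\pi|_{V_\phi}$, and $\pi$ is a continuous linear map possessing the continuous linear right inverse $\sigma$. (Alternatively, $g$ is the composition of the diffeomorphism $\phi\colon U_\phi\to V_\phi$ with the submersion $\pi|_{V_\phi}\colon V_\phi\to \K^k$, hence a submersion by Lemma~\ref{composub}.) Finally, the level set is computed directly: $g^{-1}(\{0\})=\phi^{-1}(\ker\pi\cap V_\phi)=\phi^{-1}(F\cap V_\phi)=\phi^{-1}(\phi(N\cap U_\phi))=N\cap U_\phi$, as required.

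I do not expect a genuine obstacle here; the single point demanding care is the passage from ``$F$ closed of finite codimension'' to ``$\pi$ is a continuous linear projection with a continuous linear right inverse'', which rests entirely on the complementedness of finite-codimensional closed subspaces together with the automatic continuity of linear maps between finite-dimensional Hausdorff spaces. I would also remark that when $\K=\C$ one should read $\K^k$ in place of $\R^k$, since $E/F$ is $k$-dimensional over $\K$; for $\K=\R$ this is literally $\R^k$ as stated.
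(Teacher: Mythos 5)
Your proposal is correct and follows essentially the same route as the paper's own proof: choose a submanifold chart, split $E=F\oplus C$ with $\dim_\K C=k$ using the complementedness of closed finite-codimensional subspaces, and take $g$ to be the chart followed by the continuous linear projection onto $C\isom\K^k$, verifying the submersion property directly from the definition and computing $g^{-1}(\{0\})=N\cap U_\phi$. Your closing remark about reading $\K^k$ rather than $\R^k$ when $\K=\C$ is a reasonable clarification of the statement, but otherwise there is nothing to add.
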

\begin{proof}
Let $E$ be the modelling space of~$M$ and $F\sub E$ be a closed vector subspace
such that $N$ is modelled on~$F$. There is a chart $\phi\colon U\to V\sub E$ of~$M$
such that $x\in U$ and $\phi(N\cap U)=F\cap V$.
Since $F$ has finite codimension $k$ in $E$, there exists a vector subspace $Y\sub E$
of dimension $k$ such that $E=F\oplus Y$ as a topological vector space.
Let $\pr_2\colon F\oplus Y\to Y$ be the projection onto the second component
and $\alpha\colon Y\to \R^k$ be an isomorphism.
Then $g:=\alpha\circ \pr_2\circ \phi\colon U\to \R^k$ is a $C^r_\K$-map
such that $N\cap U=g^{-1}(\{0\})$. Since $\alpha^{-1}\circ g\circ \phi^{-1}=\pr_2|_V$,
we see that $g$ is a $C^r_\K$-submersion.
\end{proof}
\begin{la}\label{locdeco}
Let $f\colon M\to N$ be a $C^r_\K$-submersion
and $x_0\in M$.
Then there exists an open neighbourhood
$O$ of $x_0$ in~$M$, an open subset $S\sub \ker T_{x_0}f$
and a $C^r_\K$-diffeomorphism
\[
\theta\colon f(O)\times S\to O
\]
such that $\theta(\{y\}\times S)=O\cap f^{-1}(\{y\})$ for all $y\in f(O)$.
\end{la}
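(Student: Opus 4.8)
The plan is to refine the local description from Lemma~\ref{locapro}, keeping explicit track of the modelling space of the second factor so that it can be identified with $\ker T_{x_0}f$. First I would unwind the definition of a $C^r_\K$-submersion at $x_0$: there are charts $\phi\colon U_\phi\to V_\phi\sub E$ of~$M$ and $\psi\colon U_\psi\to V_\psi\sub F$ of~$N$ with $x_0\in U_\phi$, $f(U_\phi)\sub U_\psi$ and $\psi\circ f\circ\phi^{-1}=\pi|_{V_\phi}$ for a continuous linear $\pi\colon E\to F$ with continuous linear right inverse $\sigma$. After replacing $\phi$ by $\phi-\phi(x_0)$ and $\psi$ by $\psi-\pi(\phi(x_0))$ I may assume $\phi(x_0)=0$ and $\psi(f(x_0))=0$, and the relation $\psi\circ f\circ\phi^{-1}=\pi|_{V_\phi}$ persists. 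I then use the topological direct sum $E=\ker(\pi)\oplus\sigma(F)$ and, replacing $\psi$ by $(\pi|_{\sigma(F)})^{-1}\circ\psi$, arrange that $\pi$ becomes the projection $\pr_{\sigma(F)}\colon E\to\sigma(F)$ along $\ker(\pi)$ (this only alters the local modelling space of~$N$, which is harmless).

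Next I would pin down the isomorphism between the fibre direction and $\ker T_{x_0}f$. By Remark~(c) and the chain rule, $T_{x_0}f=(T_{f(x_0)}\psi)^{-1}\circ\pi\circ T_{x_0}\phi$, so $\ker T_{x_0}f=(T_{x_0}\phi)^{-1}(\ker(\pi))$, and $\lambda:=T_{x_0}\phi|_{\ker T_{x_0}f}\colon\ker T_{x_0}f\to\ker(\pi)$ is a topological isomorphism (hence a $C^r_\K$-diffeomorphism, being linear). Shrinking $V_\phi$, I may assume $V_\phi=P\times Q$ with open sets $0\in P\sub\sigma(F)$ and $0\in Q\sub\ker(\pi)$, and shrinking $U_\psi$ that $V_\psi=P$. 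I set $O:=\phi^{-1}(P\times Q)$ and $S:=\lambda^{-1}(Q)\sub\ker T_{x_0}f$, an open neighbourhood of~$0$. Since $f(\phi^{-1}(a,b))=\psi^{-1}(a)$ for $(a,b)\in P\times Q$, I get $f(O)=\psi^{-1}(P)=U_\psi$, so that $\psi\colon f(O)\to P$ is a chart. I would then define $\theta\colon f(O)\times S\to O$ by $\theta(y,s):=\phi^{-1}(\psi(y),\lambda(s))$, i.e.\ $\theta=\phi^{-1}\circ(\psi\times\lambda)$, a composite of $C^r_\K$-diffeomorphisms and hence itself a $C^r_\K$-diffeomorphism onto $O$.

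Finally I would verify the fibre condition: from $f(\phi^{-1}(a,b))=\psi^{-1}(a)$ it follows that $O\cap f^{-1}(\{y\})=\phi^{-1}(\{\psi(y)\}\times Q)=\theta(\{y\}\times S)$ for every $y\in f(O)$, which is exactly the claim. I do not expect a genuine obstacle here, since the statement is essentially a bookkeeping refinement of Lemma~\ref{locapro}. The one point demanding care is the identification of the modelling space of the fibre factor with $\ker T_{x_0}f$: this is where the normalisation $\phi(x_0)=0$ is used, as it is what makes $T_{x_0}\phi$ carry the linear subspace $\ker T_{x_0}f$ onto the linear subspace $\ker(\pi)$ and renders $\lambda$ linear (thus $C^r_\K$), which in turn guarantees both that $S$ is genuinely an open subset of the vector space $\ker T_{x_0}f$ and that $\theta$ is of class $C^r_\K$.
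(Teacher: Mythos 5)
Your proposal is correct and follows essentially the same route as the paper: unwind the submersion chart $\psi\circ f\circ\phi^{-1}=\pi|_{V_\phi}$, split $V_\phi$ as a product of open sets in $\ker(\pi)$ and a complement, transport the fibre factor to $\ker T_{x_0}f$ via the (linear, hence $C^r_\K$) isomorphism induced by $T_{x_0}\phi$, and define $\theta$ through $\phi^{-1}$ composed with $\psi$ in the base direction and that isomorphism in the fibre direction. The only cosmetic difference is that the paper writes $\theta(y,z)=\phi^{-1}(\alpha(z)+\sigma(\psi(y)))$ as a sum in $E=\ker(\pi)\oplus\sigma(F)$ rather than as a pair, and your normalisation $\phi(x_0)=0$ is convenient but not actually what makes $T_{x_0}\phi$ carry $\ker T_{x_0}f$ onto $\ker(\pi)$ --- that holds for the derivative regardless.
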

\begin{proof}
There exists a chart $\phi\colon U_\phi\to V_\phi\sub E$ of~$M$ with $x:0\in U_\phi$
and a chart $\psi\colon U_\psi\to V_\psi\sub F$
such that $f(U_\phi)\sub U_\psi$ and $\psi\circ f\circ \phi^{-1}=\pi|_{V_\phi}$
for a continuous linear map $\pi\colon E\to F$ which admits a continuous linear right
inverse $\sigma\colon F\to E$. Then $E=\ker(\pi)\oplus \sigma(F)$ as a topological
vector space. After shrinking the neighbourhood $V_\phi$ of $\phi(x_0)$,
we may assume that $V_\phi=P\times Q$ with open subsets $P\sub\ker\pi$ and $Q\sub \sigma(F)$.
After shrinking $V_\phi$, we may assume that $V_\phi=\pi(Q)$.
Thus $\pi|_Q\colon Q\to V_\psi$ is a $C^r_\K$-diffeomorphism,
with inverse $\sigma|_{V_\psi}\colon V_\psi\to Q$.
The map
\[
\alpha:=d\phi|_{T_{x_0}M} \colon T_{x_0}M \to E
\]
is an isomorphism of topological vector spaces and $\alpha^{-1}(\ker\pi)=\ker T_{x_0}f$.
Define $O:=U_\phi$, $S:=\alpha^{-1}(Q)\sub \ker T_{x_0}f$ and
\[
\theta\colon f(O)\times S\to O,\quad \theta(y,z):=\phi^{-1}(\alpha(z)+\sigma(\psi(y))).
\]
Since $V_\phi=P\times Q$, the map $\theta$ is a $C^r_\K$-diffeomorphism.
For $(y,z)\in f(O)\times S$, we have
\begin{eqnarray*}
f(\theta(y,z))& =& \psi^{-1}\psi f\phi^{-1}(\alpha(z)+\sigma(\psi(y)))=\psi^{-1}\pi(\alpha(z)+\sigma(\psi(y)))\\
&=&\psi^{-1}(\pi\sigma(\psi(y)))=\psi^{-1}(\psi(y))=y.
\end{eqnarray*}
Hence $\theta(\{y\}\times S)=O\cap f^{-1}(\{y\})$.
\end{proof}
\begin{la}\label{easieremb}
For a $C^r_\K$-map $f\colon M\to N$ between $C^r_\K$-manifolds,
the following conditions are equivalent:
\begin{itemize}
\item[\rm(a)]
$f$ is a $C^r_\K$-embedding;
\item[\rm(b)]
$f(M)$ is a $($not necessarily pure$)$ split $C^r_\K$-submanifold of~$N$ and $f|^{f(M)}\colon M\to f(M)$ is
a $C^r_\K$-diffeomorphism.
\end{itemize}
\end{la}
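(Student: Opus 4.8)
The plan is to prove the two implications separately, in each case passing to the local normal forms supplied by the definitions and exploiting that, for an immersion, the modelling subspace $j(E)$ is complemented (hence closed) in~$F$. Throughout I will use the corestriction notation $f|^{f(M)}$ and $\psi|^{F_0}$ for charts of submanifolds.

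For (a)$\Rightarrow$(b): fix $x\in M$ and choose charts $\phi\colon U_\phi\to V_\phi\sub E$ and $\psi\colon U_\psi\to V_\psi\sub F$ realizing the immersion, so that $\psi\circ f\circ\phi^{-1}=j|_{V_\phi}$ for a linear topological embedding $j\colon E\to F$ onto a complemented subspace $j(E)$. The crucial point is to upgrade the identity $\psi(f(U_\phi))=j(V_\phi)$, which describes $f$ only on one chart domain, to a statement about all of $f(M)$ near $f(x)$. Here I would invoke that $f$ is a \emph{topological embedding}: then $f(U_\phi)$ is open in $f(M)$, so $f(U_\phi)=f(M)\cap O$ for some open $O\sub N$; after replacing $U_\psi$ by $U_\psi\cap O$ I get $f(M)\cap U_\psi=f(U_\phi)$, and after further shrinking $V_\psi$ to an open set $W$ with $W\cap j(E)=j(V_\phi)$ I obtain $\psi(f(M)\cap U_\psi)=V_\psi\cap j(E)$. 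Since $j(E)$ is closed and complemented, $\psi$ is a split submanifold chart, so Lemma~\ref{pureatx} yields that $f(M)$ is a (not necessarily pure) split $C^r_\K$-submanifold of~$N$. Finally, reading $f|^{f(M)}$ in the chart $\phi$ and the induced submanifold chart $\psi|^{j(E)}$ exhibits it locally as the linear isomorphism $j\colon E\to j(E)$; together with injectivity (from the embedding) this shows $f|^{f(M)}$ is a $C^r_\K$-diffeomorphism.

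For (b)$\Rightarrow$(a): the topological embedding property is immediate, since a submanifold carries the subspace topology and $f|^{f(M)}$ is a diffeomorphism, hence a homeomorphism of $M$ onto $f(M)$ with the topology induced by~$N$. For the immersion property, fix $x$, put $y=f(x)$, and take a split submanifold chart $\psi\colon U_\psi\to V_\psi\sub F$ of~$N$ with $\psi(f(M)\cap U_\psi)=V_\psi\cap F_0$, where $F_0$ is closed and complemented. Choose a chart $\phi$ of~$M$ around $x$ small enough that $f(U_\phi)\sub f(M)\cap U_\psi$. Then $h:=\psi|^{F_0}\circ f|^{f(M)}\circ\phi^{-1}$ is a $C^r_\K$-diffeomorphism from $V_\phi$ onto an open subset of $F_0$ (in particular $E\isom F_0$). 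I would then replace $\phi$ by the compatible chart $\wt\phi:=\lambda\circ h\circ\phi$, where $\lambda\colon F_0\to E$ is any topological isomorphism; a direct computation gives $\psi\circ f\circ\wt\phi^{-1}=\iota\circ\lambda^{-1}$, with $\iota\colon F_0\emb F$ the inclusion. This is a linear topological embedding onto the complemented subspace $F_0$, so $f$ is a $C^r_\K$-immersion and therefore a $C^r_\K$-embedding.

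The routine parts are the chart shrinkings and the compatibility checks. The two steps I expect to carry the real content are: in (a)$\Rightarrow$(b), using the topological embedding hypothesis to pass from $f(U_\phi)$ to $f(M)\cap U_\psi$ --- without it an immersed image can return arbitrarily close to $f(x)$, and $\psi(f(M)\cap U_\psi)$ would strictly contain $V_\psi\cap j(E)$ --- and in (b)$\Rightarrow$(a), the linearization of $f$ achieved by absorbing the diffeomorphism $h$ into the chart of~$M$, which turns the a priori only $C^r_\K$ local representative into the linear inclusion.
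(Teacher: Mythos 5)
Your proposal is correct and follows essentially the same route as the paper: in (a)$\Rightarrow$(b) it uses the topological embedding hypothesis to shrink $U_\psi$ so that $f(M)\cap U_\psi=f(U_\phi)$ and thereby turns the immersion chart into a submanifold chart, and in (b)$\Rightarrow$(a) it absorbs the local diffeomorphism $\psi\circ f\circ\phi^{-1}$ into a new chart of $M$ so that the local representative becomes a linear topological embedding onto the complemented modelling subspace. Both key steps match the paper's argument, so no further comment is needed.
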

\begin{proof}
Let $E_1$ be the modelling space of $M$ and $E_2$ be the modelling space
of~$N$.
Let $j\colon f(M)\to N$ be the inclusion map.

(b)$\impl$(a).
Assume that $f(M)$ is a split $C^r_\K$ submanifold of~$N$, modelled on a complemented vector subspace
$F\sub E_2$,
and assume that $f|^{f(M)}\colon M\to f(M)$ is a $C^r_\K$-diffeomorphism.
Then $f=j\circ f|^{f(M)}$ is a topological embedding.
Moreover, $E_1$ is isomorphic to~$F$,
and we choose an isomorphism
\[
\alpha\colon E_1\to F
\]
of topological vector spaces.
Let $\psi\colon U_\psi\to V_\psi\sub E_2$ be a chart for $N$
such that $f(x)\in U_\psi$ and
\[
\psi(f(M)\cap U_\psi)= F\cap V_\psi.
\]
Let $Y\sub E_2$ be a vector subspace such that $E_2=F\oplus Y$ as a topological vector space.
After shrinking the open neighbourhood $U_\psi$ of $f(x)$,
we may assume that $V_\psi=P\times Q$ with open subsets $P\sub F$ and $Q\sub Y$.
Thus $F\cap V_\psi=P$.
Let $\phi\colon U_\phi\to V_\phi\sub E_1$ be a chart
of~$M$ such that $x\in U_\phi$ and $f(U_\phi)\sub U_\psi$.
Since $\psi|_{f(M)\cap U_\psi}\colon f(M)\cap U_\psi\to F\cap V_\psi$ is a chart for $f(M)$, 
\[
\psi\circ f\circ \phi^{-1}\colon V_\phi\to F\cap V_\psi
\]
is a $C^r_\K$-diffeomorphism onto an open subset of $F\cap V_\psi=P$.
After shrinking~$P$, we may assume that $\psi\circ f\circ \phi^{-1}$
is a $C^r_\K$-diffeomorphism $V_\phi\to P$.
Let $W:=\alpha^{-1}(P)\sub E_1$.
Then
\[
\theta:=(\psi\circ f\circ \phi^{-1})^{-1}\circ \alpha|_W\colon W\to V_\phi
\]
is a $C^r_\K$-diffeomorphism and hence also $\theta^{-1}\circ \phi\colon U_\phi\to W$ is a chart
for~$M$. Now
\[
\psi\circ f\circ (\theta^{-1}\circ \phi)^{-1}=\psi\circ f\circ \phi^{-1}\circ \theta=\alpha|_W
 \]
where $\alpha\colon E_1\to F\sub E_2$ is a linear topological embedding
onto a complemented vector subspace of~$E_2$.
Hence $f$ is a $C^r_\K$-immersion.

(a)$\impl$(b). If $f$ is a $C^r_\K$-embedding,
then $f$ is a topological embedding and a $C^r_\K$-immersion.
Let $y\in f(M)$ and $x\in M$ such that $y=f(x)$.
We find
charts $\phi\colon U_\phi\to V_\phi \sub E_1$
and $\psi\colon U_\psi\to V_\psi\sub E_2$ of $M$ and $N$, respectively,
such that $x\in U_\phi$, $f(U_\phi)\sub U_\psi$ and
\[
\psi\circ f\circ \phi^{-1}=i|_{V_\phi}
\]
for a linear topological embedding $i\colon E_1\to E_2$
onto a complemented vector subspace.
Set $F:=i(E_1)$ and choose a vector subspace
$Y\sub E_2$ such that $E_2=F\oplus Y$.
Since $i(V_\phi)$ is relatively open in $F\sub E_2$,
there is an open subset $Q\sub E_2$ such that $i(V_\phi)=F\cap Q$.
We may assume that $Q\sub V_\psi$;
after replacing $V_\psi$ with $Q$, we may assume that $V_\psi=Q$ and hence
\[
F\cap V_\psi=i(V_\phi).
\]
Since $f$ is a topological embedding, $f(U_\phi)$ is relatively open in $f(M)$, whence there exists an open subset
$W\sub N$ such that $W\cap f(M)=f(U_\psi)$.
We may assume that $W\sub U_\psi$;
after shrinking $U_\psi$,
we may assume that $U_\psi=W$ and hence
\[
f(M) \cap U_\psi = f(U_\phi)=\psi^{-1}((\psi\circ f\circ \phi^{-1})(V_\phi))=\psi^{-1}(i(V_\phi))
=\psi^{-1}(F\cap V_\psi).
\]
Thus $\psi(f(M)\cap U_\psi)=F\cap V_\psi$,
showing that $f(M)$ is a (not necessarily pure) $C^r_\K$-submanifold of~$N$
with $f(M)\cap U_\psi$ a $C^r_\K$-submanifold modelled on~$F$.
As $F$ is complemented in~$E_2$, we are dealing with split submanifolds.
With respect to the chart $\phi$ for $M$ and the chart
\[
\tau:=\psi|_{f(M)\cap U_\psi}\colon
f(M)\cap U_\psi\to F\cap V_\psi
\]
for $f(M)$, we have $\tau\circ f\circ \phi^{-1}=i|_{V_\phi}\colon V_\phi\to i(V_\phi)$.
As this map is a $C^r_\K$-diffeomorphism, $f\colon M\to f(M)$ is a local $C^r_\K$-diffeomorphism
around each point and hence a $C^r_\K$-diffeomorphism,
as $f\colon M\to f(M)$ also is a homeomorphism.
\end{proof}
\begin{la}\label{loclemb}
If $f\colon M\to N$ is a $C^r_\K$-immersion between $C^r_\K$-manifolds,
then each $x\in M$ has an open neighbourhood $W\sub M$ such that $f|_W$
is a $C^r_\K$-embedding.
\end{la}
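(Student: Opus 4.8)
The plan is to restrict $f$ to the domain of a single immersion chart around $x$ and to check directly that this restriction is a topological embedding; it is automatically a $C^r_\K$-immersion, being the restriction of one to an open subset, so by the definition of a $C^r_\K$-embedding we will be done.

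First I would invoke the definition of a $C^r_\K$-immersion to obtain a chart $\phi\colon U_\phi\to V_\phi\sub E$ of~$M$ with $x\in U_\phi$ and a chart $\psi\colon U_\psi\to V_\psi\sub F$ of~$N$ with $f(x)\in U_\psi$ such that $f(U_\phi)\sub U_\psi$ and $\psi\circ f\circ \phi^{-1}=j|_{V_\phi}$, where $j\colon E\to F$ is a linear topological embedding onto a complemented vector subspace of~$F$. I set $W:=U_\phi$. Since this single pair of charts witnesses the immersion property at \emph{every} point of $U_\phi$ (the displayed identity holds on all of $V_\phi$), the restriction $f|_W$ is again a $C^r_\K$-immersion.

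It then remains to see that $f|_W$ is a topological embedding. Here I would compute $\psi(f(U_\phi))=(\psi\circ f\circ \phi^{-1})(V_\phi)=j(V_\phi)$. As $f(U_\phi)\sub U_\psi$ with $U_\psi$ open in~$N$, the topology induced on $f(U_\phi)$ from~$N$ coincides with the one induced from $U_\psi$, so the restricted chart $\psi|_{f(U_\phi)}\colon f(U_\phi)\to j(V_\phi)$ is a homeomorphism onto $j(V_\phi)$ with its subspace topology from~$F$. Since $j$ is a topological embedding, $j|_{V_\phi}\colon V_\phi\to j(V_\phi)$ is a homeomorphism as well, and $\phi\colon U_\phi\to V_\phi$ is a homeomorphism by the definition of a chart. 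Writing the corestriction $f|_W\colon W\to f(W)$ as $(\psi|_{f(W)})^{-1}\circ j|_{V_\phi}\circ \phi$ then exhibits it as a composition of three homeomorphisms; hence $f|_W$ is a topological embedding. Being both a $C^r_\K$-immersion and a topological embedding, $f|_W$ is a $C^r_\K$-embedding.

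I expect the only delicate point to be the topological bookkeeping in the third paragraph: one must verify that the global embedding property of the linear map $j$ genuinely descends to a homeomorphism between the open set $V_\phi$ and its image carrying the \emph{induced} topology, and that the topology on $f(W)$ is correctly computed inside the open neighbourhood $U_\psi$ rather than in all of~$N$. There is no analytic content beyond this; once the topologies are identified, the conclusion is immediate. (Alternatively, one could phrase the same argument through Lemma~\ref{easieremb}, showing that $f(W)$ is a split submanifold and that $f|^{f(W)}$ is a diffeomorphism, but the direct verification above seems shortest.)
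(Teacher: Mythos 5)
Your proposal is correct and follows essentially the same route as the paper's proof, which also takes $W:=U_\phi$ from the immersion chart and observes that $f|_{U_\phi}=\psi^{-1}\circ j|_{V_\phi}\circ\phi$ is a topological embedding, hence a $C^r_\K$-embedding. You merely spell out the topological bookkeeping (identifying the induced topologies on $f(W)$ and $j(V_\phi)$) that the paper leaves implicit.
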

\begin{proof}
Let $E_1$ and $E_2$ be the modelling space of $M$ and $N$, respectively.
Given $x\in M$, we find charts $\phi\colon U_\phi\to V_\phi\sub E_1$ and
$\psi\colon U_\psi\to V_\psi\sub E_2$ of $M$ and $N$, respectively,
such that $x\in U_\phi$, $f(U_\phi)\sub U_\psi$ and
\[
\psi\circ f\circ \phi^{-1}=j|_{V_\phi}
\]
for a linear topological embedding $j\colon E_1\to E_2$ with complemented
image. Hence $f|_{U_\phi}=\psi^{-1}\circ j|_{V_\phi}\circ \phi$
is a topological embedding and thus $f|_{U_\phi}$
is a $C^r_\K$-embedding.
\end{proof}
\begin{la}
Let $f\colon M_1\to M_2$ and $g\colon M_2\to M_3$ be $C^r_\K$-maps
between $C^r_\K$-manifolds.
\begin{itemize}
\item[\rm(a)]
If $f$ and $g$ are $C^r_\K$-immersions, then also $g\circ f\colon
M_1\to M_3$ is a $C^r_\K$-immersion.
\item[\rm(b)]
If $f$ and $g$ are $C^r_\K$-embeddings, then also $g\circ f\colon M_1\to M_3$
is a $C^r_\K$-embedding.
\end{itemize}
\end{la}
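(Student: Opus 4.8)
The plan is to verify each assertion directly from the local characterisation of immersions and embeddings, composing the chart data supplied by the definitions. Both statements are local in nature, so the natural strategy is to produce, around an arbitrary point, the single chart pair that witnesses the immersion (resp. embedding) property for $g\circ f$. I would handle part (a) first, then bootstrap part (b) from it together with the elementary fact that a composition of topological embeddings is a topological embedding.

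For part (a), fix $x\in M_1$ and set $y:=f(x)\in M_2$. Since $g$ is a $C^r_\K$-immersion, choose charts $\psi\colon U_\psi\to V_\psi\sub E_2$ of $M_2$ with $y\in U_\psi$ and $\chi\colon U_\chi\to V_\chi\sub E_3$ of $M_3$ with $g(y)\in U_\chi$ such that $g(U_\psi)\sub U_\chi$ and $\chi\circ g\circ\psi^{-1}=j_2|_{V_\psi}$ for a linear topological embedding $j_2\colon E_2\to E_3$ onto a complemented subspace. Since $f$ is a $C^r_\K$-immersion, choose charts $\phi\colon U_\phi\to V_\phi\sub E_1$ of $M_1$ with $x\in U_\phi$ and a chart of $M_2$ around $y$; after shrinking $U_\phi$ I may take this second chart to be the $\psi$ already fixed (charts around the same point are freely interchangeable, shrinking domains as needed), so that $f(U_\phi)\sub U_\psi$ and $\psi\circ f\circ\phi^{-1}=j_1|_{V_\phi}$ for a linear topological embedding $j_1\colon E_1\to E_2$ onto a complemented subspace. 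Then $(g\circ f)(U_\phi)\sub U_\chi$ and
\[
\chi\circ(g\circ f)\circ\phi^{-1}=(\chi\circ g\circ\psi^{-1})\circ(\psi\circ f\circ\phi^{-1})=(j_2\circ j_1)|_{V_\phi}.
\]
It remains to check that $j_2\circ j_1\colon E_1\to E_3$ is a linear topological embedding onto a complemented subspace. It is linear and a topological embedding as a composite of such. For the complement, write $E_3=j_2(E_2)\oplus C$ and $E_2=j_1(E_1)\oplus D$ as topological vector spaces; then $j_2(E_2)=j_2(j_1(E_1))\oplus j_2(D)$, the latter a direct topological sum since $j_2$ is an isomorphism onto its image, and hence $E_3=(j_2\circ j_1)(E_1)\oplus(j_2(D)\oplus C)$ as a topological vector space. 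Thus $(j_2\circ j_1)(E_1)$ is complemented, and $g\circ f$ is a $C^r_\K$-immersion.

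For part (b), assume $f$ and $g$ are $C^r_\K$-embeddings. By definition each is both a $C^r_\K$-immersion and a topological embedding. Part (a) already gives that $g\circ f$ is a $C^r_\K$-immersion, so I need only observe that $g\circ f$ is a topological embedding. This is the standard point-set fact that a composition of topological embeddings is a topological embedding: if $f|^{f(M_1)}$ and $g|^{g(M_2)}$ are homeomorphisms onto their images, then $(g\circ f)|^{(g\circ f)(M_1)}$ is a homeomorphism onto its image, being the composite of the homeomorphism $f|^{f(M_1)}\colon M_1\to f(M_1)$ with the restriction of the homeomorphism $g|^{g(M_2)}$ to the subspace $f(M_1)\sub M_2$. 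Hence $g\circ f$ is both a $C^r_\K$-immersion and a topological embedding, i.e.\ a $C^r_\K$-embedding.

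I do not expect any genuine obstacle here; the content is entirely bookkeeping. The one step deserving a moment's care is the complementation argument in part (a), where one must confirm that the image of a complement under the embedding $j_2$ remains a topological complement — this uses that $j_2$ is an isomorphism of $E_2$ onto the closed complemented subspace $j_2(E_2)$, so that direct-sum decompositions of $E_2$ are transported faithfully into $E_3$. Everything else follows by composing the defining chart identities.
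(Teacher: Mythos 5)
There is a genuine gap in your proof of part (a), at the step where you assert that ``charts around the same point are freely interchangeable, shrinking domains as needed'' and hence that the immersion property of $f$ at $x$ can be witnessed using the chart $\psi$ of $M_2$ that you already fixed for $g$. The definition of a $C^r_\K$-immersion only guarantees the existence of \emph{some} adapted pair of charts; for an arbitrary prescribed chart $\psi$ around $f(x)$ there need not exist any chart $\phi$ of $M_1$ with $\psi\circ f\circ\phi^{-1}$ linear. Changing the chart on the source only reparametrizes the domain and cannot straighten the image of $f$ inside the new target chart. Concretely, take $f\colon\R\to\R^2$, $f(t)=(t,0)$, and the global chart $\psi(u,v)=(u,v+u^2)$ of $\R^2$: then $\psi\circ f\circ\phi^{-1}$ has image inside a parabola for every chart $\phi$ of $\R$, so it is never the restriction of a linear map on a neighbourhood of the relevant point. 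If you insist on the direct chart computation, what you actually obtain is $\chi\circ(g\circ f)\circ\phi^{-1}=j_2\circ h\circ j_1$ with $h=\psi_1\circ\psi_0^{-1}$ a nonlinear transition map between the two charts of $M_2$, and you must then construct a further diffeomorphism on the target side to absorb $h$ (using that $j_2(E_2)$ is complemented); that construction is essentially the content of Lemma~\ref{easieremb} and is not ``bookkeeping''. Your complementation argument for $(j_2\circ j_1)(E_1)$ is correct, but it only becomes relevant once the normal form is actually achieved. Since your part (b) is deduced from part (a), it inherits the gap, although the point-set observation that a composition of topological embeddings is a topological embedding is of course fine.

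For comparison, the paper avoids this issue by reversing the logical order: it first proves (b) using the characterization of $C^r_\K$-embeddings as maps onto split submanifolds that are diffeomorphisms onto their images (Lemma~\ref{easieremb}), together with the transitivity of (split) submanifolds (Lemma~\ref{subinsub}); it then deduces (a) locally from (b) via the fact that every immersion is locally an embedding (Lemma~\ref{loclemb}). You could repair your argument either by following that route or by explicitly carrying out the straightening of the transition map $h$ sketched above.
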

\begin{proof}
(b) By Lemma~\ref{easieremb},
$f(M_1)$ is a split $C^r_\K$-submanifold of~$M_2$,
$g(M_2)$ is a split $C^r_\K$-submanifold of~$M_3$ and
both $f\colon M_1\to f(M_1)$ and $g\colon M_2\to g(M_2)$ is a $C^r_\K$-diffeomorphism.
Thus $g(f(M))$ is a split $C^r_\K$-submanifold of $g(M_2)$.
By Lemma~\ref{subinsub}, $g(f(M))$ is a $C^r_\K$-submanifold of~$M_3$,
and both $g(M_2)$ and $M_3$ induce the same $C^r_\K$-manifold structure
on $g(f(M_1))$. Since $g|_{f(M_1)}\colon f(M_1)\to g(f(M_1))\sub g(M_2)$ is a $C^r_\K$-diffeomorphism,
by the preceding it also is a $C^r_\K$-diffeomorphism to $g(f(M_1))\sub M_3$, and hence
$g\circ f|_{M_1}=g|_{f(M_1)}\circ f_{M_1}\colon M_1\to g(f(M_1))\sub M_3$
is a $C^r_\K$-diffeomorphism.
Thus $g\circ f$ is a $C^r_\K$-embedding, by Lemma~\ref{easieremb}.

(a) Let $x\in M_1$. By Lemma~\ref{loclemb},
there are open neighbourhoods $W_1\sub M_1$ of $x$ and $W_2\sub M_2$ of $f(x)$
such that $f|_{W_1}$ and $g|_{W_2}$ are $C^r_\K$-embeddings.
After shrinking $W_1$, we may assume that $f(W_1)\sub W_2$.
Then $(g\circ f)|_{W_1}=g|_{W_2}\circ f|_{W_1}$ is a $C^r_\K$-embedding (by (b) already established)
and hence $(g\circ f)|_{W_1}$ is a $C^r_\K$-immersion.
As the open sets $W_1$ cover $M_1$ as $x$ ranges through $M_1$, we see that $g\circ f$
is a $C^r_\K$-immersion.
\end{proof}
\begin{la}
Let $f\colon M_1\to N_1$ and $g\colon M_2\to N_2$ be $C^r_\K$-maps
between $C^r_\K$-manifolds.
\begin{itemize}
\item[\rm(a)]
If $f$ and $g$ are $C^r_\K$-immersions, then also $g\times f\colon
M_1\times M_2 \to N_1\times N_2$ is a $C^r_\K$-immersion.
\item[\rm(b)]
If $f$ and $g$ are $C^r_\K$-embeddings, then also $g\times f\colon M_1\times M_2\to N_1\times N_2$
is a $C^r_\K$-embedding.
\end{itemize}
\end{la}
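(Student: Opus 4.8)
The final lemma asserts that products of immersions are immersions, and products of embeddings are embeddings:
\begin{la}
Let $f\colon M_1\to N_1$ and $g\colon M_2\to N_2$ be $C^r_\K$-maps
between $C^r_\K$-manifolds.
\begin{itemize}
\item[\rm(a)]
If $f$ and $g$ are $C^r_\K$-immersions, then also $g\times f\colon
M_1\times M_2 \to N_1\times N_2$ is a $C^r_\K$-immersion.
\item[\rm(b)]
If $f$ and $g$ are $C^r_\K$-embeddings, then also $g\times f\colon M_1\times M_2\to N_1\times N_2$
is a $C^r_\K$-embedding.
\end{itemize}
\end{la}

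Let me think about how to prove this.

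For part (a), the immersion condition is local: at each point there should exist charts in which the map looks like a linear topological embedding with complemented image. Since $f$ and $g$ are immersions, around each $x_1 \in M_1$ and $x_2 \in M_2$ I have charts realizing $f$ and $g$ as linear embeddings $j_1: E_1 \to F_1$ and $j_2: E_2 \to F_2$ with complemented images. The natural idea is to take product charts $\phi_1 \times \phi_2$ on $M_1 \times M_2$ and $\psi_1 \times \psi_2$ on $N_1 \times N_2$, and observe that $g \times f$ gets conjugated to $j_1 \times j_2 : E_1 \times E_2 \to F_1 \times F_2$.

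So the key algebraic facts I need are: (i) $j_1 \times j_2$ is a linear topological embedding, and (ii) its image $j_1(E_1) \times j_2(E_2)$ is complemented in $F_1 \times F_2$. Fact (i) is routine — product of embeddings of topological vector spaces is an embedding onto the product of images. For fact (ii), if $C_1$ complements $j_1(E_1)$ in $F_1$ and $C_2$ complements $j_2(E_2)$ in $F_2$, then $C_1 \times C_2$ complements $j_1(E_1) \times j_2(E_2)$ in $F_1 \times F_2$, since $F_1 \times F_2 = (j_1(E_1) \oplus C_1) \times (j_2(E_2) \oplus C_2) = (j_1(E_1) \times j_2(E_2)) \oplus (C_1 \times C_2)$ as topological vector spaces. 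These are all elementary; I do not anticipate any genuine obstacle in part (a).

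For part (b), by Lemma~\ref{easieremb} an embedding is characterized by the condition that the image is a split (not necessarily pure) submanifold and the co-restriction is a diffeomorphism. So I would appeal to (a) to get that $g \times f$ is an immersion, and then separately show it is a topological embedding. A product of topological embeddings is a topological embedding (the product of homeomorphisms onto the respective images is a homeomorphism onto the product of the images, which carries the subspace topology from $N_1 \times N_2$ by definition of the product topology). Combining "immersion" and "topological embedding" gives "embedding" directly from the definition given in the Introduction. Alternatively, I could work entirely through Lemma~\ref{easieremb}: the product of two split submanifolds is a split submanifold of the product (modelled on the product of the modelling subspaces, complemented by the product of the complements), and a product of diffeomorphisms onto images is a diffeomorphism. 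I expect the cleanest route is the first: invoke (a) for the immersion part and verify the topological embedding part directly, since that keeps the argument short and avoids re-deriving submanifold structure. No step here is hard; the only thing to be careful about is keeping straight which factor is which in the unfortunate notation $g \times f$ (the map on $M_1 \times M_2$ really sends $(x_1,x_2)$ to $(f(x_1), g(x_2))$, so the indices should be matched to $f,g$ consistently throughout).
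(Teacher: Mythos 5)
Your proposal is correct and follows essentially the same route as the paper: part (a) by conjugating $f\times g$ via product charts to $j_1\times j_2$, which is a linear topological embedding with image complemented by the product of the complements, and part (b) by combining (a) with the elementary fact that a product of topological embeddings is a topological embedding. No further comment is needed.
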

\begin{proof}
Let $E_j$ be the modelling space of~$M_j$ and $F_j$ be the modelling space of $N_j$ for $j\in \{1,2\}$.

(a) $f$ locally looks like a linear topological embedding $i_1\colon E_1\to F_1$ with complemented image
and $g$ locally looks like a linear topological embedding $i_2\colon E_2\to F_2$ with complemented
image. Hence $f\times g$ locally looks like $i_1\times i_2$ which is a linear topological
embedding with complemented image. Thus $f\times g$ is a $C^r_\K$-immersion.

(b) $f\times g$ is a $C^r_\K$-embedding (by (a)) and a topological embedding,
as both $f$ and $g$ is a topological embedding. Hence $f\times g$ is a $C^r_\K$-embedding.
\end{proof}
\section{Proof of Theorem A}
We know that every $C^r_\K$-submersion is a na\"{\i}ve submersion. Conversely,
assume that the $C^r_\K$-map $f\colon M\to N$ is a na\"{\i}ve
submersion to the Banach manifold~$N$.
Since being a $C^r_\K$-submersion is
a local property, we may assume that $W:=M$ is an open subset of a locally
convex space $E$ and $N$ is an open subset of a Banach space~$Y$. 
Let $w_0\in W$. Since $f$ is a na\"{\i}ve submersion,
there exists a vector subspace $F\sub E$ such that $E=(\ker f'(w_0))\oplus F$
as a topological vector space and $\alpha:=f'(w_0)|_F\colon F\to Y$ is an
isomorphism of topological vector spaces.
After replacing $f$ with $\alpha^{-1}\circ f$, we may assume that $Y=F$
(and $f'(w_0)|_F=\id_F$). Write $w_0=(x_0,y_0)$ with $x_0\in \ker f'(w_0)$ and $y_0\in F$.
After shrinking $W$, we may assume that $W=U\times V$
with open neighbourhoods $U\sub \ker f'(w)$ of $x_0$ and $V\sub F$ of $y_0$.
By the inverse function theorem with parameters (see \ref{invpara}),
after shrinking the neighbourhoods $U$ and $V$ we may assume that
the map
\[
\theta\colon U\times V\to U\times F,\quad (x,y)\mto(x,f(x,y))
\]
is a $C^r_\K$-diffeomorphism onto an open subset $\Omega$ of $U\times F$.
Identifying $E$ with $(\ker f'(w_0))\times F$, we can consider $\theta$ as a global chart
for $W=U\times V$. On $N\sub F$, we use the the chart $\id_N$.
Then
\[
\id_N\circ f\circ \theta^{-1}=f\circ \theta^{-1}=\pr_2\circ \theta\circ\theta^{-1}=\pr_2|_\Omega
\]
with $\pr_2\colon E\times F\to F$, $(x,y)\mto y$. Thus $f$ is a $C^r_\K$-submersion.\,\Punkt
\section{Proof of Theorem B}
Let us assume that $g\colon M_2\to N$ is a $C^r_\K$-submersion in the situation
of Theorem~B (the case that $f$ is a $C^r_\K$-submersion is similar).
Then, for each $(x_0,y_0)\in S:=\{(x,y)\in M_1\times M_2\colon f(x)=g(x)\}$,
we find a chart $\phi\colon U_\phi\to V_\phi$ from an open neighbourhood $U_\phi\sub M_2$ of $y_0$
onto an open set $V_\phi$ in the modelling space $E_2$ of $M_2$
and a chart $\psi\colon U_\psi\to V_\psi$ from an open neighbourhood $U_\psi\sub N$ of $g(y_0)$
onto an open subset $U_\psi$ of the modelling space $F$ of $N$ such that $g(U_\phi)\sub U_\psi$
and
\[
\psi\circ g\circ \phi^{-1}=\pi|_{V_\phi}
\]
for some continuous linear map $\pi\colon E_2\to F$
which has a continuous linear right inverse $\sigma\colon F\to E_2$.
After replacing the modelling space of $N$ with the isomorphic locally convex space
$\sigma(F)$, we may assume that $E_2=\ker(\pi)\oplus F$ as a topological vector space
and $\pi=\pr_1\colon F\times \ker(\pi)\to F$ is the projection onto the first factor.
There is a chart $\tau\colon U_\tau\to V_\tau$ of $M_1$ with $x_0\in U_\tau$ and $f(U_\tau)\sub U_\psi$.
Since being a submanifold is a local property, it suffices to show that $T:=S\cap (U_\tau\times U_\phi)$
is a submanifold of $U_\tau\times U_\phi$.
Equivalently, we need to show that $(\tau\times \phi)(T)$ is a submanifold
of $V_\tau\times V_\phi$. But this is the fibre product of $V_\tau$ and $V_\phi$
with respect to the maps $\psi\circ f\circ \tau^{-1}$ and $\psi\circ g\circ \phi^{-1}$.
We may therefore assume now that $M_j$ is an open subset of a locally convex space~$E_j$
for $j\in \{1,2\}$, that $N$ an open subset of a complemented vector subspace $F\sub E_2$,
and $g=\pr_1|_{M_2}$ with $\pr_1\colon F\oplus C\to F$, $(y,z)\mto y$
for a complementary vector subspace $C\sub E_2$. After shrinking the
neighbourhood $M_2$ of the given point $y_0$, we may assume
that $M_2=P\times Q$ with
For $x\in M_1$ and $(y,z)\in M_2$ with $y\in F$ and $z\in C$, we have
\[
f(x)=g(y,z)=\pr_1(y,z)=y
\]
if and only if $y=f(x)$. Thus $S=\graph(f)\times Q$ is a split $C^r_\K$-submanifold
of $M_1\times M_2=M_1\times P\times Q$ (recalling that graphs are split). Let
$\pi\colon M_1\times M_2\to M_1$ denote the pojection
onto the first component. The map $\theta\colon S\to M_1\times Q$, $(x,y,z)\to (x,z)$
is a chart for $S$ with inverse
\[
\theta^{-1}\colon M_1\times Q\to S,\quad (x,z)\mto (x,f(x),z).
\]
Using the chart $\id_{M_1}$ for $M_1$, we obtain $(\id_{M_1}\circ \pi\circ \theta^{-1})(x,z)=x$.
Thus $\pi|_S\colon S\to M_1$ is a $C^r_\K$-submersion.\,\Punkt
\section{Proof of Theorem C}
Let $E$ be the modelling space of $M$ and $F$ be the modelling space of~$N$.
Given $x_0\in M$, let $\phi\colon U_\phi\to V_\phi\sub E$
be a chart of~$M$ with $x_0\in U_\phi$ and $\psi\colon U_\psi\to V_\phi\sub F$
be a chart for $N$ with $f(x_0)\in U_\psi$ such that $f(U_\phi)\sub U_\psi$
and
\[
\psi\circ f\circ \phi^{-1}=\pi|_{V_\phi}
\]
where $\pi\colon E\to F$ is a continuous linear map which admits a continuous linear
right inverse $\sigma\colon F\to E$. It suffices to show that
$f^{-1}(S)\cap U_\phi=(f|_{U_\phi})^{-1}(S\cap U_\psi)$
is a submanifold of $U_\phi$ or, equivalently, that
$\phi(f^{-1}(S)\cap U_\phi)=(\psi\circ f\circ \phi^{-1})^{-1}(\psi(S\cap U_\psi))$
is a submanifold of $V_\phi$.
We may therefore assume that $M$ and $N$ are open subsets of $E$
and $F$, respectively, and that $f=\pi|_M$
for a continuous linear map $\pi\colon E\to F$ with a continuous linear right inverse $\sigma\colon F\to E$.
After replacing $F$ with $\sigma(F)$, we may assume that $\pi\colon F\oplus \ker(\pi)\to F$
is the projection onto the first factor.
After shrinking the open $x_0$-neighbourhood $M_1$, we may assume that $M_1=P\times Q$ with open
subsets $P\sub F$ and $Q\sub \ker(\pi)$. Then $P\sub M_2$;
after shrinking $M_2$, we may assume that $M_2=P$.
Let $Y\sub F$ be the modelling space of~$S$.
Now
\[
f^{-1}(S)=S\times Q
\]
is a $C^r_\K$-submanifold of $P\times Q=M_2\times Q=M_1$
(which is split if $S$ is split)
and $g:=f|_{f^{-1}(S)}\colon S\times Q\to S$ is a $C^r_\K$-submersion because
for each chart $\theta\colon U_\theta\to V_\theta\sub Y$ of $S$, we have
$\theta\circ g\circ (\theta\times \id_Q)^{-1}=\pr_1|_{V_\theta\times Q}$
with the projection $\pr_1\colon Y\times \ker(\pi)\to Y$.
If $S$ has finite codimension $k$ in $N$, then $\dim_\K(F/Y)=k$.
Hence also $\dim_\K((F\times \ker(\pi))/(Y\times \ker(\pi))=k$
and thus $f^{-1}(S)=S\times Q$ has finite codimension, $k$,
in $P\times Q=M_1$.
Finally, if $x=(s,q)\in f^{-1}(S)$, then
$T_x(f^{-1}(S))=T_sS\times T_qQ=T_sS\times \ker\pi=\pi^{-1}(T_sS)$
(identifying $T_sS$ with a vector subspace of~$F$).\,\Punkt
\section{Proof of Theorem D}
By the hypothesis an Lemma~\ref{subatx}, each $x\in f^{-1}(\{y\})$
has an open neighbourhood $U_x\sub M$ such that $f|_{U_x}$ is a $C^r_\K$-submersion.
Then
\[
U:=\bigcup_{x\in f^{-1}(\{y\})}U_x
\]
is an open submanifold of $M$ such that $f^{-1}(\{y\})\sub U$
and $g:=f|_U$ is a $C^r_\K$-submersion.
Since $\{y\}$ is a split submanifold of $N$ with $T_y\{y\}=\{0\}$, we deduce with Theorem~$C$
that $f^{-1}(\{y\})=g^{-1}(\{y\})$ is a (not necessarily pure)
split $C^r_\K$-submanifold of~$U$ and hence of~$M$,
with $T_x f^{-1}(\{y\})=T_xg^{-1}(\{0\})=\ker T_xg=\ker T_xf$
for each $x\in f^{-1}(\{y\})$.
If $N$ has finite dimension $k$, then $\{y\}$ has co-dimension $k$ in~$N$
and thus $g^{-1}(\{y\})$ has co-dimension $k$ in $U$
(and hence in $M$), by Theorem~C.\,\Punkt
\section{Proof of Theorem E}
For each $x\in f^{-1}(S)$, $T_xf\colon T_xM\to T_{f(x)}N$ has dense image.
Since $S$ is a $C^r_\K$-submanifold of~$N$ of finite codimension~$k$,
there exists an open neighbourhood $W\sub N$ of $f(x_0)$
and a $C^r_\K$-submersion $g\colon W\to \R^k$ such that
$S\cap W=g^{-1}(\{0\})$ (see Lemma~\ref{alwayszero}).
Then $f^{-1}(W)$ is an open neighbourhood of $x_0$ in~$M$
such that
\[
f^{-1}(S)\cap f^{-1}(W)=f^{-1}(S\cap W)=f^{-1}(g^{-1}(\{0\}))
=(g\circ f|_{f^{-1}(W)})^{-1}(\{0\}).
\]
Now $h:=g\circ f|_{f^{-1}(W)}\colon f^{-1}(W)\to\R^k$ is a $C^r_\K$-map
such that $T_yh=T_{f(y)}g\circ T_zf$ has dense image
for each $y\in h^{-1}(\{0\})=f^{-1}(S)\cap f^{-1}(W)$.
As every dense vector subspace of $T_{h(y)}(\R^k)\cong\R^k$
is all of $T_{h(y)}(\R^k)$,
we deduce that $T_yh\colon T_y M\to T_{h(y)}(\R^k)$ is surjective
for each $y\in f^{-1}(S)\cap f^{-1}(W)$.
Hence, by the regular value theorem with finite-dimensional
range manifolds (as stated after Theorem~D),
$f^{-1}(S)\cap f^{-1}(W)=h^{-1}(\{0\})$
is a split $C^r_\K$-submanifold of $f^{-1}(W)$ of codimension~$k$,
and $T_x(f^{-1}(S))=\ker T_xh=(T_xf)^{-1}(\ker T_{f(x)} g)=(T_xf)^{-1}(T_{f(x)}S)$
for each $x\in f^{-1}(S)\cap f^{-1}(W)$.
Since $f^{-1}(W)$ is an open neighbourhood of $x_0$ in~$M$,
the assertion follows.\,\Punkt
\section{Proof of Theorem F}
See, e.g., \cite[Part II, Chapter III, \S10 4) Theorem]{Ser}
for a proof of the
constant rank theorem
for mappings between open subsets of finite-dimensional
vector spaces. The following proof is also indebted to~\cite{Gai},
where a particularly clear presentation of the classical case is given.\\[2mm]
Because all assertions of Theorem~F are of a local nature,
it suffices to prove it in the case that $M=U$ is an open subset
of a locally convex space~$E$ and $N=\R^n$.
Let $x_0\in U$ and consider a $C^r_\K$-map $f\colon U\to\R^n$ such that $\im f'(x)$ has dimension~$\ell$,
for each $x\in U$. Let $Z:=\im f'(x_0)$. After replacing $f$ with $\alpha\circ f$ with a suitable
automorphism $\alpha\colon \R^n\to\R^n$, we may assume that $Z=\R^\ell\times\{0\}$,
which we identify with $\R^\ell$.
Then $K:=\ker f'(x_0)$ has finite co-dimension, enabling us to pick a finite-dimensional vector subspace
$C\sub E$ such that $E=C\oplus K$. Then $Z=f'(x_0)(C)$,
whence we can pick an $\ell$-dimensional vector subspace
$Y\sub C$ with $f'(x_0)(Y)=Z$.
Let $D\sub C$ be a vector subspace such that $C=Y\oplus D$.
Then
\[
E=Y\oplus D\oplus K =Y\oplus W
\]
with $W:=D\oplus K$
and
\[
f'(x_0)|_Y \colon Y\to Z=\R^\ell\times\{0\}
\]
is an isomorphism onto its image $Z$.
Write $f=(f_1,f_2)$ with $f_1\colon U\to\R^\ell$, $f_2\colon U\to \R^{n-\ell}$.
Then $(f_1)'(x_0)|_Y=f'(x_0)|_Y^Z$ is an isomorphism between $\ell$-dimensional
vector spaces,
showing that the $C^r_\K$-map $f_1$
satisfies the hypotheses of the Inverse Function Theorem with Parameters
(cf.\ \ref{invpara}). We therefore find an open neighbourhood
$P\times Q\sub Y\times W$ of $x_0$ such that the map
\[
g\colon P\times Q\to Z\times W,\quad (y,w)\mto (f_1(y,w),w)
\]
has open image $\Omega$ and is a $C^r_\K$-diffeomorphism onto
its image. Write $x_0=(y_0,w_0)$ with $y_0\in P\sub Y$, $w_0\in Q\sub W$.
We choose a connected open neighbourhood $\Omega_2$ of $w_0$ in the locally convex space~$W$
and an open neighbourhood
$\Omega_1$ of $f_1(y_0,w_0)$ in~$\R^\ell$ such that
\[
\Omega_1\times \Omega_2 \sub \Omega.
\]
Then $\Omega_3:=g^{-1}(\Omega_1\times \Omega_2)$ is an open neighbourhood
of $x_0$ in $U$ and
$g$ restricts to the $C^r_\K$-diffeomorphism
\[
\phi \colon \Omega_3 \to \Omega_1\times \Omega_2,\quad \phi(y,w)=(f_1(y,w),w).
\]
Then
\[
(f\circ \phi^{-1})(v,w)=(v,h(v,w))\quad \mbox{for all $(v,w)\in\Omega_1\times \Omega_2$}
\]
with a $C^r_\K$-function $h\colon \Omega_1\times\Omega_2\to \R^{n-\ell}$.
By the preceding formula,
for all $(v,w)\in\Omega_1\times\Omega$,
we can express the image of the partial differential with respect to the variables in $\R^\ell$ as
\[
\im\, d_1(f\circ \phi^{-1})(v,w;.)=\graph d_1h(v,w,.),
\]
which is an $\ell$-imensional vector space and intersects
$\{0\}\times \R^{n-\ell}$ in the trivial vector subspace.
Hence, because $\im f'(v,w)$ has dimension $\ell$,
it must not contain non-zero vectors $u$ in $\{0\}\times \R^{n-\ell}$,
as the sum $\graph d_1h(v,w,.)+\R v$ would be direct and hence
$\dim \,\im\,f'(v,w)>\ell$, contradicting the hypotheses.
We deduce that
\[
d_2h(v,w,.)=0\;\;\mbox{for all $(v,w)\in\Omega_1\times\Omega_2$.}
\]
Thus, for fixed $x$, the map $h(x,.)\colon\Omega_2\to\R^{n-k}$
has vanishing differential and so $h(x,.)$ is constant, as $\Omega_2$
is assumed connected (see, e.g., \cite{GaN}).
Hence $h(v,w)=h(v,w_0)=b(v)$
in terms of the $C^r_\K$-function
\[
b\colon \Omega_1\to \R^{n-\ell},\quad b(v):=h(v,w_0).
\]
Thus $(f\circ\phi^{-1})(v,w)=(v,b(v))$ for all $v\in \Omega_1$.
Now
\[
\psi\colon \R^\ell\times\R^{n-\ell}\to\R^\ell\times\R^{n-\ell},
\;\; (v,u)\mto (v,u-b(v))
\]
is a $C^r_\K$-diffeomorphism (with inverse $(v,u)\mto (v,u+b(v))$)
and
\[
(\psi\circ f\circ\phi^{-1})(v,w)=(v,0),
\]
as desired.\\[2.3mm]
Note that $f(\Omega_3)$ is a $C^r_\K$-submanifold of
$\R^n$ as $\psi$ is a diffeomorphism and $\psi(f(\Omega_3))=(\psi \circ f\circ \phi^{-1})(\Omega_1\times\Omega_2)=
\Omega_1\times \{0\}$ is a $C^r_\K$-submanifold of $\R^\ell\times \R^{n-\ell}=\R^n$.
Moreover, $\psi$ restricts to a $C^r_\K$-diffeomorphism
$f(\Omega_3)\to \Omega_1\times\{0\}=g(\Omega_1\times\Omega_2)$
with
\[
g:=\psi\circ f\circ \phi^{-1}\colon \Omega_1\times\Omega_2\to\R^n=\R^\ell\times\R^{n-\ell},\;\;
(v,w)\mto (v,0).
\]
For each $y\in f(\Omega_3)$, we have
\[
\Omega_3\cap f^{-1}(\{y\})=  \phi^{-1}( (\psi\circ f\circ \phi^{-1})^{-1}( \{\psi(y)\})
=\phi^{-1}(g^{-1}(\{\psi(y)\})).
\]
Hence $\Omega_3 \cap f^{-1}(\{y_0\})$ will be a split $C^r_\K$-submanifold of $\Omega_3$
with tangent spaces as asserted if $g^{-1}(\{\psi(y)\})$ is a split $C^r_\K$-submanifold
of $\Omega_1\times\Omega_2$ and $T_x (g^{-1}(\{\psi(y\}))=\ker T_xg$
for each $x\in g^{-1}(\{\psi(y)\})$.
We have $y=\phi^{-1}(v,w)$ with suitable $v\in\Omega_1$ and $w\in \Omega_2$.
Then $\psi(y)=g(v,w)=(v,0)$ and
\[
g^{-1}(\{\psi(y)\})=g^{-1}(v,0)=\{v\}\times\Omega_2,
\]
which indeed is a split $C^r_\K$-submanifold of $\Omega_1\times \Omega_2$
with tangent space $\{0\}\times W=\ker T_xg$ for each~$x$.
The map
\[
\zeta\colon \im(g) \times\Omega_2=
(\Omega_1\times \{0\})\times \Omega_2\to \Omega_1\times \Omega_2,\;
((v,0),w)\mto (v,w)
\]
is a $C^r_\K$-diffeomorphism such that
\[
\zeta(\{(v,0)\}\times\Omega_2)=\{v\}\times\Omega_2=g^{-1}(\{(v,0)\})
\]
for each $(v,0)\in \im(g)$.
Then
\[
\theta\colon f(\Omega_3)\times \Omega_2\to\Omega_3,\quad
\theta(y,w):=\phi^{-1}(\zeta(\psi(y),w))
\]
is a $C^r_\K$-diffeomorphism
such that $\theta(\{y\}\times \Omega_3)=\Omega_3\cap f^{-1}(\{y\})$ for each $y\in f(\Omega_3)$.\,\Punkt
\section{Proof of Theorem G}
In this section, our conventions are as follows:
We let $\K\in\{\R,\C\}$.
If $\K=\R$, we let $r\in \{\infty,\omega\}$.
If $\K=\C$, we let $r:=\omega$.\\[2.3mm]
We shall use a folklore fact,
part of which can also be found in \cite{BaN}.
As the result is known in special cases and the proof
does not contain surprises, we relegate it to an appendix
(Appendix~\ref{apphomog}).
\begin{la}\label{lafolk}
Let $G$ be a $C^r_\K$-Lie group
and $H\sub G$ be a $C^r_\K$-Lie subgroup.\footnote{Thus $H$ is locally closed in $G$
(like any submanifold) and hence closed, being a subgroup.}
Let $q\colon G\to G/H$, $x\mto xH$ be the canonical
map. Then the following conditions are equivalent:
\begin{itemize}
\item[\rm(a)]
There exists a $C^r_\K$-manifold structure on $G/H$ which turns $q$ into a $C^r_\K$-submersion.
\item[\rm(b)]
There is a submanifold $S\sub G$ with $e\in S$
such that $SH\sub G$ is open and the product map
\[
S\times H\to SH,\quad (x,y)\mto xy
\]
is a $C^r_\K$-diffeomorphism.
\item[\rm(c)]
There is a submanifold $S\sub G$ with $e\in S$
and an open identity neighbourhood $W\sub H$
such that $SW\sub G$ is open and the product map
\[
S\times W\to SW,\quad (x,y)\mto xy
\]
is a $C^r_\K$-diffeomorphism.
\end{itemize}
If {\em(a)--(c)} hold, then the action $G\times G/H\to G/H$, $(g,xH)\mto gxH$ is $C^r_\K$.
Also, $T_eq=L(H)$.
If, moreover, $N$ is a normal subgroup of~$G$, then $G/H$ is a $C^r_\K$-Lie group.
\end{la}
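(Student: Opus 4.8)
The plan is to prove the equivalences cyclically, $(a)\Rightarrow(b)\Rightarrow(c)\Rightarrow(a)$, and then to derive the three supplementary assertions (smoothness of the action, the identification of $\ker T_eq$, and the Lie group structure) from the submersion property by repeated use of Lemma~\ref{checkdiff}.

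For $(a)\Rightarrow(b)$, assuming $q$ is a $C^r_\K$-submersion, I would apply Lemma~\ref{locapro} at $e$ to obtain an open neighbourhood $U\sub G$ of $e$, an open set $U_1\sub G/H$, a manifold $U_2$, and a diffeomorphism $\theta\colon U_1\times U_2\to U$ with $q\circ\theta=\pr_1$ and $q(U)=U_1$. Writing $\theta^{-1}(e)=(eH,u_2^0)$, I set $S:=\theta(U_1\times\{u_2^0\})$, a split submanifold of $G$ containing $e$ on which $q|_S\colon S\to U_1$ is a diffeomorphism. Then $SH=q^{-1}(U_1)$ is open, and the product map $S\times H\to SH$ is a bijection whose inverse $x\mapsto\big((q|_S)^{-1}(q(x)),\,(q|_S)^{-1}(q(x))^{-1}x\big)$ is $C^r_\K$; hence it is a $C^r_\K$-diffeomorphism, giving (b). The step $(b)\Rightarrow(c)$ is routine: restricting the diffeomorphism $S\times H\to SH$ to the open subset $S\times W$, for any open identity neighbourhood $W\sub H$, yields a diffeomorphism onto the open set $SW$.

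The substance is $(c)\Rightarrow(a)$. First I would shrink $S$ so that $q|_S$ becomes injective: choosing an open $\tilde W\sub G$ with $\tilde W\cap H=W$ and using continuity of $(s_1,s_2)\mapsto s_1^{-1}s_2$ at $(e,e)$, I pick an open $S_0\ni e$ in $S$ with $S_0^{-1}S_0\cap H\sub W$; then $s_1H=s_2H$ with $s_i\in S_0$ forces $s_1^{-1}s_2\in W$, and injectivity of $S\times W\to SW$ gives $s_1=s_2$. Replacing $S$ by $S_0$ preserves (c). The key device is the retraction $\rho\colon SH\to S$ sending $x$ to the unique $s\in S$ with $x\in sH$; well-definedness uses injectivity of $q|_S$, and $C^r_\K$-ness is local, since near $x_0=s_0h_0$ one has $\rho(x)=\pi_S(xh_0^{-1})$, where $\pi_S$ is the $S$-component of the inverse of $S\times W\to SW$. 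I would equip $G/H$ with the quotient topology (Hausdorff because the subgroup $H$ is closed) and, for each $g\in G$, use $\beta_g\colon S\to q(gS)$, $s\mapsto gsH$, as parametrisations; each $\beta_g$ is a homeomorphism onto the quotient-open set $q(gS)$ because $UW$ is open for open $U\sub S$. The transition maps compute to $\beta_{g'}^{-1}\circ\beta_g(s)=\rho((g')^{-1}gs)$, a composition of the inclusion $S\hookrightarrow G$, a left translation, and $\rho$, hence $C^r_\K$; this produces the manifold structure. Finally $q$ is a submersion: near $e$ the product map $S\times W\to SW$ and the chart $\beta_e$ exhibit $q$ as $\pr_1$ via Lemma~\ref{locapro}, and for general $x_0$ one transfers this along the left translations $\lambda_{x_0}$ on $G$ and $\bar\lambda_{x_0}$ on $G/H$, the latter being a diffeomorphism since it reads as the identity in the charts $\beta_g$.

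For the addenda I would argue uniformly. The action map $\mu\colon G\times G/H\to G/H$ satisfies $\mu\circ(\id_G\times q)=q\circ m_G$, where $m_G$ is the multiplication of $G$; since $\id_G\times q$ is a surjective $C^r_\K$-submersion, Lemma~\ref{checkdiff} gives that $\mu$ is $C^r_\K$. For the tangent space, the local model $\pr_1\colon S\times W\to S$ shows $\ker T_eq=T_eW=T_eH=L(H)$, as $W$ is open in $H$. When $H$ is normal, $G/H$ is a group, and the identities $\bar m\circ(q\times q)=q\circ m_G$ and $\bar\iota\circ q=q\circ\iota_G$, combined with Lemma~\ref{checkdiff} applied to the surjective submersions $q\times q$ and $q$, make multiplication and inversion $C^r_\K$, so $G/H$ is a $C^r_\K$-Lie group. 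I expect the main obstacle to be the $(c)\Rightarrow(a)$ step, and within it the verification that the retraction $\rho$ is $C^r_\K$ and hence that the transition maps $\rho((g')^{-1}g\,\cdot\,)$ are $C^r_\K$; once $\rho$ is available, everything else reduces to bookkeeping.
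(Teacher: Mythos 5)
Your proposal is correct and follows essentially the same route as the paper: both shrink $S$ so that $H\cap S^{-1}S\sub W$ forces injectivity of $q|_S$, endow $G/H$ with the quotient topology, use the translated sections $s\mto gsH$ as charts with transition maps expressed through the inverse of the product map, and derive the addenda (smooth action, $\ker T_eq=L(H)$, Lie group structure in the normal case) from Lemma~\ref{checkdiff} applied to the surjective submersions $\id_G\times q$ and $q\times q$. The only organizational difference is that you pass directly from (c) to (a) via the locally defined retraction $\rho\colon SH\to S$, whereas the paper first upgrades the diffeomorphism $S\times W\to SW$ to $S\times H\to SH$ by right-$H$-equivariance (its step (c)$\Rightarrow$(b)) and then builds the charts from that global product decomposition; the two devices carry the same information, and your explicit $\rho$ in fact makes the transition-map formula $\beta_{g'}^{-1}\circ\beta_g(s)=\rho((g')^{-1}gs)$ cleaner than the paper's abbreviated statement.
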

\begin{numba}
Recall that an \emph{immersed $C^r_\K$-submanifold}
of a $C^r_\K$-manifold~$M$ is a $C^r_\K$-manifold~$N$
such that $N\sub M$ as a set and each $x\in N$ has an open neighbourhood
$W\sub N$ such that $W$ is a split $C^r_\K$-submanifold of $M$
and $\id_W\colon W\to W$
is a $C^r_\K$-diffeomorphism if we consider $W$ on the left as an open submanifold of~$N$
and give $W$ on the right the $C^r_\K$-manifold structure induced by~$M$.
The the inclusion map $j\colon N\to M$ is $C^r_\K$ in particular
and $j_x\colon T_xN\to T_xM$ is a linear topological embedding onto
a complemented vector subspace of $T_xM$.
We identify $T_xN$ with its image in $T_xM$.
\end{numba}
\begin{numba}
If $M$ is a $C^r_\K$-manifold modelled on~$E$, we consider its
tangent bundle $TM$ as a $C^r_\K$-vector bundle over $M$ with typical fibre~$E$.
A (regular) \emph{vector distribution on~$M$}
is a $C^r_\K$-vector subbundle $D\sub TM$.
We say that the vector distribution is finite-dimensional, Banach and co-Banach-respectively,
if its typical fibre $F\sub E$ is finite-dimensional, a Banach space, resp.,
a co-Banach vector subspace $F\sub E$.
We abbreviate $D_p:=D\cap T_pM$ for $p\in M$. Thus $D_p\cong F$.
\end{numba}
We define (cf.\ \cite{Ey2}):
\begin{defn}
If $D$ is a vector distribution on a $C^r_\K$-manifold~$M$,
then each connected immersed $C^r_\K$-submanifold $N\sub M$
such that
\[
T_p N=D_p\quad\mbox{for all $p\in N$}
\]
is called an \emph{integral submanifold}
for~$D$.
An integral submanifold $N\sub M$ is called \emph{maximal}
if it has the following property:\\[2.3mm]
For every integral submanifold $N_1\sub M$ such that $N_1\cap N\not=\emptyset$,
we have $N_1\sub N$ and the inclusion map
$N_1\to N$ is $C^r_\K$.
Maximal integral submanifolds are also called \emph{leafs}.\\[2.3mm]
A chart $\phi\colon U\to V$ of~$M$ is called a \emph{Frobenius chart}
if we can write $V=P\times Q$ with an open set $P\sub C$ in a
vector subspace $C\sub E$ such that $E=C\oplus F$ as a topological vector space
and a connected open subset $Q\sub F$,
and the $C^r_\K$-submanifolds
\[
S_p:=\phi^{-1}(\{p\}\times Q)
\]
are integral manifolds for $D$, for all $p\in P$.
\end{defn}
In \cite{Ey1} and \cite{Ey2}, J. M. Eyni proved
Frobenius theorems for finite-dimensional, Banach
and co-Banach vector distributions
on $C^k$-manifolds (under suitable hypotheses),
and applied these to the integration problem for Lie subalgebras
(cf.\ also \cite{Hi1} and \cite{Tei}).
His methods show:
\begin{prop}\label{maineyni}
Let $G$ be a $C^r_\K$-Lie group
and $\ch\sub L(G)$ be a Lie subalgebra of the $\K$-Lie algebra $L(G)$.
Assume that
\begin{itemize}
\item[\rm(a)]
$\ch$ is co-Banach in~$L(G)$; or:
\item[\rm(b)]
$\ch$ is a Banach space, complemented in $L(G)$, and
there is a $C^r_\K$-map $\ve \colon \ch\to G$
such that $\R\to G$, $t\mto \ve(tv)$ is a group
homomorphism with $\frac{d}{dt}\Big|_{t=0}\ve(tv)=v$
for all $v\in\ch$.
\end{itemize}
Then the left invariant vector distribution on $G$ given by $D_g:=T\lambda_g(\ch)$
admits leafs $L_g$ with $g\in L_g$, for each $g\in G$.
The leaf $H:=L_e$ is an immersed $C^r_\K$-Lie subgroup of~$G$
with $L(H)=\ch$ and $L_g=gH$ for each $g\in G$.
Moreover, for each $g\in G$ there exists a
Frobenius chart around~$g$.
\end{prop}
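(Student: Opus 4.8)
The plan is to recognize $D$ as an involutive left-invariant distribution, invoke Eyni's Frobenius theorem in the respective case to obtain Frobenius charts and leafs, and then extract the group structure from left-invariance. First I would describe $D$ explicitly. Writing $X_v(g):=g.v=T\lambda_g(v)$ for $v\in\ch$, these left-invariant vector fields take values in $D$, and under the left-trivialization $TG\isom G\times L(G)$ they exhibit $D$ as the $C^r_\K$-subbundle with constant fibre $\ch$; this fibre is co-Banach in case (a) and Banach and complemented in case (b), so $D$ is a vector distribution of the type treated by Eyni. The decisive algebraic input is involutivity: for $v,w\in\ch$ one has $[X_v,X_w]=X_{[v,w]}$, and $[v,w]\in\ch$ because $\ch$ is a Lie subalgebra, so the sections of $D$ are closed under the bracket.

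Next I would apply Eyni's Frobenius theorem. In case (a) the co-Banach version applies directly to the involutive co-Banach distribution $D$. In case (b) the map $\ve$ supplies the flows of the spanning fields $X_v$ that are not otherwise available over a general locally convex model: by the homomorphism property of $t\mapsto\ve(tv)$, the integral curve of $X_v$ through $e$ is $t\mapsto\ve(tv)$, and these curves depend on $v$ in a $C^r_\K$ fashion through $\ve$; this is precisely the datum feeding Eyni's Banach Frobenius theorem. In either case the output is that every $g\in G$ admits a Frobenius chart and that through each point passes a unique maximal integral submanifold, the leafs, which partition~$G$.

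Finally I would read off the group structure. Since $\lambda_g$ is a $C^r_\K$-diffeomorphism with $T\lambda_g(D_x)=D_{gx}$, it carries integral submanifolds to integral submanifolds, hence leafs to leafs; as $\lambda_g(L_e)$ is a leaf containing $g$, we obtain $L_g=gH$ with $H:=L_e$. For $h\in H$ the leaf $\lambda_{h^{-1}}(H)$ contains $h^{-1}h=e$, so by uniqueness of the leaf through $e$ we get $h^{-1}H=H$; this yields $h^{-1}\in H$ and closure under multiplication, so $H$ is a subgroup, and $T_eH=D_e=\ch$ gives $L(H)=\ch$. To see that $H$ is an immersed $C^r_\K$-Lie subgroup I would check that multiplication and inversion, which are $C^r_\K$ on $G$ and map $H\times H$ resp.\ $H$ into $H$, are $C^r_\K$ for the leaf topology; since $H$ is locally a split submanifold of $G$ (through its Frobenius charts), smoothness into $H$ follows once these maps are seen to be continuous into the finer leaf topology, which left-invariance of the Frobenius charts provides.

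The main obstacle is the integrability step in case (b): involutivity alone does not force integrability when the ambient model is an infinite-dimensional locally convex space, and the hypothesis on $\ve$ is exactly what repairs this, furnishing the flows of the generating fields and hence the germ of a leaf at $e$ that Eyni's construction then spreads over $G$ by left translation. A secondary delicate point is the smoothness of the group operations with respect to the immersed (leaf) topology rather than the topology induced from $G$, where one must first establish continuity into the leaf topology before differentiability can be invoked.
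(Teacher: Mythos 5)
Your outline follows essentially the same route as the paper: both reduce the proposition to Eyni's Frobenius theorems for co-Banach and Banach distributions, and your reconstruction of how the group structure is extracted (involutivity from $[X_v,X_w]=X_{[v,w]}$, left translations permuting leafs, hence $L_g=gH$ and $H$ a subgroup with $L(H)=\ch$) is exactly the mechanism the paper delegates to Eyni with the phrase ``his methods show.'' The one point you omit is in fact the entire content of the paper's own proof: Eyni's results are published only for $\K=\R$ and $r=\infty$, whereas the proposition is also asserted for the real and complex analytic cases (the standing convention in that section is $r\in\{\infty,\omega\}$ for $\K=\R$ and $r=\omega$ for $\K=\C$). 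To cover those cases one must inspect Eyni's proofs and observe that their key analytic ingredient, the inverse function theorem with parameters of \ref{invpara}, is equally available for $C^\omega_\R$- and $C^\omega_\C$-maps; without that remark your appeal to ``Eyni's Frobenius theorem in the respective case'' does not literally apply outside the smooth real setting.
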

\begin{proof}
The case $\K=\R$, $r=\infty$ is covered by \cite{Ey1} and \cite{Ey2}.
However, because the inverse function theorem with parameters works
just as well for real and complex analytic maps,
inspecting the proofs one finds that the proposition remains valid
in the real and complex analytic cases.
\end{proof}
Note that condition (b) of the Proposition~\ref{maineyni}
is satisfied with $\ve:=\exp_G|_\ch$ if $G$ has an exponential
function $\exp_G\colon \cg\to G$ which is $C^r_\K$.
\begin{prop}\label{noregu}
If the leaf $H:=L_e$ in Proposition~{\em\ref{maineyni}}
is a $C^r_\K$-submanifold of~$G$ and immersed $C^r_\K$-submanifold structure
coincides with the $C^r_\K$-manifold structure induced by~$G$,
then $G/H$ can be made a $C^r_\K$-manifold in such a way that
$q\colon G\to G/H$, $x\mto xH$ becomes a $C^r_\K$-submersion.
\end{prop}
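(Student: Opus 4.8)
The plan is to verify condition (b) of Lemma~\ref{lafolk}, i.e.\ to produce a submanifold $S\sub G$ with $e\in S$ such that $SH$ is open in $G$ and the product map $S\times H\to SH$, $(x,y)\mto xy$, is a $C^r_\K$-diffeomorphism. Once this is done, Lemma~\ref{lafolk} immediately yields the desired $C^r_\K$-manifold structure on $G/H$ turning $q$ into a $C^r_\K$-submersion. The natural source of $S$ is the Frobenius chart around $e$ provided by Proposition~\ref{maineyni}: there is a chart $\phi\colon U\to V=P\times Q$ of $G$ with $\phi(e)=(0,0)$, where $P\sub C$ is open in a complemented subspace $C\sub E$ with $E=C\oplus F$ (here $E=L(G)$, $F=\ch$), $Q\sub F$ is connected open, and each plaque $S_p:=\phi^{-1}(\{p\}\times Q)$ is an integral manifold for the distribution $D_g=T\lambda_g(\ch)$.

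First I would set $S:=\phi^{-1}(P\times\{q_0\})$ for the base point $q_0$ with $\phi(e)=(0,q_0)$ (after translating, $q_0=0$); this is a split $C^r_\K$-submanifold of $G$ modelled on $C$, with $e\in S$, and $T_eS$ is complementary to $\ch=T_eH$ inside $L(G)$. The key geometric input is that, by the hypothesis of the proposition, the leaf $H=L_e$ is not merely an immersed submanifold but an honest $C^r_\K$-submanifold of $G$ whose induced structure agrees with the leaf structure; consequently the intersection $U\cap H$ is exactly the single plaque $S_0=\phi^{-1}(\{0\}\times Q)$ through $e$ (a priori a leaf could meet a Frobenius chart in countably many plaques, but here the submanifold hypothesis forces the intersection to be open in $H$ and hence to coincide with one plaque after shrinking). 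This identifies a neighbourhood of $e$ in $H$ with $\{0\}\times Q$ in the chart, so that the local product decomposition $V=P\times Q$ literally expresses $U$ as $S\times(U\cap H)$ via $\phi$.

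The main work is then to upgrade this \emph{local} product structure near $e$ to the \emph{global} product statement $S\times H\xrightarrow{\sim}SH$ demanded by Lemma~\ref{lafolk}(b). I would exploit left-invariance of $D$: for $h\in H$ the left translation $\lambda_h$ carries the Frobenius chart at $e$ to one at $h$ and permutes plaques, so the local diffeomorphism property propagates along $H$. Concretely, the product map $m\colon S\times H\to G$ is $C^r_\K$ (being a restriction of multiplication), its image $SH=\bigcup_{h\in H}(Sh)$ is open because near each point $sh$ it looks like the open set $U h$ up to the chart $\phi$, and $m$ is a local $C^r_\K$-diffeomorphism at every point by translating the computation at $(e,e)$—where $T_{(e,e)}m$ is the isomorphism $T_eS\oplus\ch\to L(G)$—by $\lambda_h$ on the right factor. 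It remains to check that $m$ is \emph{injective}: if $s_1h_1=s_2h_2$ then $s_2^{-1}s_1=h_2h_1^{-1}\in S^{-1}S\cap H$, and after shrinking $S$ (and hence $P$) so that $S^{-1}S$ meets $H$ only in $\{e\}$—using that $T_eS\cap\ch=\{0\}$ together with the submanifold hypothesis on $H$—this forces $s_1=s_2$, $h_1=h_2$.

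The step I expect to be the main obstacle is precisely this passage from the local to the global product structure, in particular the global injectivity of $S\times H\to SH$: the Frobenius chart only controls the situation near $e$, and an immersed leaf can in general return to a given chart domain through infinitely many plaques. Here the decisive leverage is the \emph{hypothesis} of the proposition that $H$ is a genuine $C^r_\K$-submanifold with induced structure, which rules out this pathology and makes the shrinking argument succeed; verifying carefully that this hypothesis yields the clean intersection $U\cap H=S_0$ and the disjointness $S^{-1}S\cap H=\{e\}$ after shrinking is the crux. Everything after that is the routine verification that $m$ is a $C^r_\K$-diffeomorphism onto the open set $SH$, whereupon Lemma~\ref{lafolk} finishes the proof.
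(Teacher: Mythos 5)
Your overall plan (extract a slice $S$ from a Frobenius chart at $e$, show that the product map is a $C^r_\K$-diffeomorphism onto an open set, feed this into Lemma~\ref{lafolk}) matches the paper's, and you correctly identify the submanifold hypothesis on $H$ as the decisive input. But there is a genuine gap at the central step: you justify that the product map $m\colon S\times H\to G$ is a local $C^r_\K$-diffeomorphism by observing that $T_{(e,e)}m\colon T_eS\oplus\ch\to L(G)$ is an isomorphism and then ``translating''. In the locally convex setting there is no inverse function theorem, so an invertible tangent map does not yield a local diffeomorphism; this is precisely the obstruction the Frobenius-chart machinery is designed to circumvent, and it cannot be waved away here (in case (a) of Proposition~\ref{maineyni} only $L(G)/\ch$ is Banach, in case (b) only $\ch$ is, while $L(G)$ itself is an arbitrary locally convex space). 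Relatedly, your remark that $V=P\times Q$ ``literally expresses $U$ as $S\times(U\cap H)$ via $\phi$'' conflates the chart parametrization of the plaques with the group-multiplication parametrization: the plaque $S_p=\phi^{-1}(\{p\}\times Q)$ lies in the coset $\phi^{-1}(p,0)H$, but $\phi^{-1}(p,q)$ is not $\phi^{-1}(p,0)\cdot\phi^{-1}(0,q)$, so the product map still has to be analysed separately.

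The paper closes this gap by writing down the inverse of the product map explicitly. With $\phi=(\pi,\phi_2)\colon U\to P\times Q$ and $S:=O\cap\phi^{-1}(P\times\{0\})$ for an identity neighbourhood $O$ with $OO\sub U$, every $x\in U$ factors as $x=s(x)h(x)$ with $s(x):=\phi^{-1}(\pi(x),0)$ and $h(x):=s(x)^{-1}x$; since $x$ lies in the plaque $S_{\pi(x)}\sub s(x)H$, one has $h(x)\in H$, and the hypothesis that $H$ is a $C^r_\K$-submanifold carrying the induced structure is exactly what makes $h\colon U\to H$ a $C^r_\K$-map into $H$ (not merely into $G$). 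Injectivity of $(s,w)\mto sw$ on $S\times W$, for a connected identity neighbourhood $W\sub H$ with $W\sub O$, comes from disjointness of the plaques: $sW$ is a connected integral manifold inside $U$ through $s=\phi^{-1}(p,0)$, hence contained in $S_p$, so $s_1W\cap s_2W=\emptyset$ for $s_1\not=s_2$. This establishes condition~(c) of Lemma~\ref{lafolk}; the globalization from $W$ to all of $H$ that you attempt by arranging $S^{-1}S\cap H=\{e\}$ is already built into the implication (c)$\Rightarrow$(b) of that lemma, so you need not---and, without the explicit local inverse in hand, cannot---redo it inside the proof of the proposition. If you replace your inverse-function-theorem step by this explicit construction of $(s,h)$, your argument becomes the paper's.
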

\begin{proof}
Let $\phi\colon U\to V=P\times Q$ with $P\sub C$, $Q\sub F$ be a chart of $M$ which is a Frobenius chart
for the left invariant vector distribution given by $D_g:=T\lambda_g \ch$
(as provided by Eyni's Theorem).
After a translation, we may assume that $\phi(e)=0$.
Choose an open identity neighbourhood $O\sub G$
such that $OO\sub U$ and a connected identity neighbourhood $W\sub H$ such that $W\sub\cO$.
Then $S:=O\cap \phi^{-1}(P\times \{0\})$ is a $C^r_\K$-submanifold of~$G$.
If $s=\phi^{-1}(p,0)\in S$, then
$sW$ is an integral manifold for~$D$ contained in $W$ which contains $\phi^{-1}(p,0)$
and hence contained in $S_p:=\phi^{-1}(\{p\}\times Q)$.
As these sets are disjoint for distinct $p$,
we see that $s_1W\cap s_2W=\emptyset$
if $s_1\not=s_2$.
As a consequence, the product map
\[
p\colon S\times W\to U\sub G,\quad (s,w)\mto sw
\]
is injective. It is clear that $p$ is $C^r_\K$.
We have $\phi^{-1}(p,0)H=L_{\phi^{-1}(p,0)}\supseteq S_p$,
for each $x\in U$.
Write $\phi=(\pi,\phi_2)\colon U\to P\times Q$. Then
\[
x=\underbrace{\phi^{-1}(\pi(x),0)}_{=:s(x)}\underbrace{\phi^{-1}(\pi(x),0)^{-1}x}_{=:h(x)}
\]
and both $s\colon U\to U$ and $h\colon U\to H$ are $C^r_\K$.
Then $U_0:=(s,h)^{-1}(O\times W)=(s,h)^{-1}(S\times W)$
is open in $U$ and
\[
p\circ (s,h)=\id_{U_0}
\]
Hence $U_0\sub \im(p)$.
Thus $Y:=p^{-1}(U_0)$ is an open subset of $S\times W$
and $p|_Y\colon Y\to U_0$ is a bijection.
Since $(s,h)\circ p|_Y=\id_Y$,
we see that the $C^r_\K$-map $(s,h)|_{U_0}$ is the inverse of $p|_Y$.
Thus $p|_Y\colon Y\to U_0$ is a $C^r_\K$-diffeomorphism.
There are open identity neighbourhoods $S_1\sub S$ and $W_1\sub W$ such that
$S_1\times W_1\sub Y$. Then $S_1W_1=p(S_1\times W_1)$ is open in $U_0$ and hence in $G$,
and the product map $p|_{S_1\times W_1}\colon S_1\times W_1\to S_1W_1\sub G$
is a $C^r_\K$-diffeomorphism. Therefore Lemma~\ref{lafolk} applies.
\end{proof}
{\bf Proof of Theorem G.}
Let $\ch:=L(H)$ and $D$ be the left invariant distribution on $G$ given by $D_g:=T\lambda_g(\ch)$.
Let $K:=L_e$ be the leaf to $D$ passing through the neutral element~$e$.
We know from Eyni's Theorem that $S$ is an immersed Lie subgroup of~$G$ with $L(K)=\ch$.
Since also the connected component $H_e$ of~$e$ in~$H$ is an integral manifold for~$D$,
the maximality of $S$ shows that $H_0\sub K$
and that the inclusion map $j\colon H_0\to K$ is $C^r_\K$.

The case that $H$ is a Banach-Lie group.
Since $L(j)=\id_\ch$, the inverse function theorem
shows that $j$ is a diffeomorphism onto an open subgroup
and hence and isomorphism of Lie groups (as $K$ is connected).
Hence $K$ is a $C^r_\K$-submanifold of~$G$ and thus $G/K$
admits a $C^r_\K$-manifold structure which turns the canonical
map $p\colon G\to G/K$, $x\mto xK$ into a $C^r_\K$-submersion
(by Proposition~\ref{noregu}).

The case that $H$ is co-Banach and regular.
Since $S$ is a connected $C^r_\K$-manifold,
we find for each $y\in K$ a $C^\infty$-map $\eta\colon [0,1]\to K$ such
$\eta(0)=e$ and $\eta(1)=y$. Let $\gamma:=\delta^\ell(\eta)$ be the left logarithmic derivative of $\eta$.
Since $H$ is regular, we can form $\zeta:=\Evol_H(\gamma)$.
Then both $\eta$ and $\zeta$ satisfy the initial value problem
\[
y'(t)=y(t)\gamma(t),\quad y(0)=e
\]
in~$G$ and hence (by the uniqueness of solutions)
$\eta=\zeta$, entailing that $x=\eta(1)=\zeta(1)\in H_0$.
Hence $K\sub H_0$ and thus $K=H_0$ as a set.
As the inclusion map $K\to G$ is $C^r_\K$ and $H_0$ is a $C^r_\K$-submanifold
of~$G$, we deduce that $\id\colon K\to H_0$ is $C^r_\K$.
As we already discussed the inverse $j$, we see that $\id\colon K\to H_0$
is a $C^r_\K$-diffeomorphism. Hence Proposition~\ref{noregu}
provides a $C^r_\K$-manifold structure on~$G/K$ which turns $p\colon G\to G/K$, $x\mto xK$
into a $C^r_\K$-submersion.

In either case, Lemma~\ref{lafolk}
provides a $C^r_\K$-submanifold $S\sub G$ with $e\in S$
and an open identity neighbourhood $W\sub H_0$
such that $SW$ is open in~$G$ and the product map
\[
S\times W\to SW
\]
is a $C^r_\K$-diffeomorphism.
Since $W$ is also an open identity neighbourhood in~$H$,
we deduce with Lemma~\ref{lafolk}
that there is a $C^r_\K$-manifold structure on $G/H$
which turns $q\colon G\to G/H$, $x\mto xH$ into a $C^r_\K$-submersion.
If $H$ is a normal subgroup, then $G/H$ is a Lie group,
again by Lemma~\ref{lafolk}.\,\Punkt
\section{Proof of Theorem H}
We know that every $C^r_\K$-immersion is a na\"{\i}ve immersion.
Conversely, let $f\colon M\to N$ be a $C^r_\K$-map between $C^r_\K$-manifolds
such that $f$ is a na\"{\i}ve immersion and condition (a) or (b) of the theorem
is satisfied.
Let $E_1$ be the modelling space of~$M$ and $E_2$ be the modelling space of~$N$.
Given $x\in M$, let $\phi\colon U_\phi\to V_\phi\sub E_1$
and $\psi\colon U_\psi\to V_\psi\sub E_2$ be charts form $M$ and $N$, respectively,
such that $x\in U_\phi$ and $f(U_\phi)\sub U_\psi$.
Abbreviate
\[
g:=\psi\circ f\circ \phi^{-1}\colon V_\phi\to V_\psi.
\]
Since $T_xf$ is a linear topological embedding with complemented image,
also $g'(\phi(x))\colon E_1\to E_2$ is a linear topological embedding with
complemented image. We let $F:=\im\, g'(\psi(x))$ and
choose a vector subspace $Y\sub E_2$ such that
$E_2=F\oplus Y$.
After shrinking the open $\psi(x)$-neighbourhood $V_\psi$ and the open $x$-neighbourhood $V_\phi$,
we may assume that
\[
V_\psi=P\times Q
\]
with open subsets $P\sub F$ and $Q\sub Y$.
Let $g=(g_1,g_2)$ be the components of $g$ with respect to
$E_2=F\oplus Y=F\times Y$. Thus $g_1(V_\phi)\sub P$.
Then
\[
g_1'(\psi(x))=g'(\psi(x))|^F\colon E_1\to F
\]
is an isomorphism of topological vector spaces. Since $E_1$ is a Banach space by hypothesis,
also $F$ is a Banach space. Since $r\geq 1$ if $E_1$ is finite-dimensional
and $r\geq 2$ if $E_1$ is infinite-dimensional,
we can use the Inverse Function Theorem for $C^r_\K$-maps between
Banach space. Thus, after shrinking the open $\psi(x)$-neighbourhood $V_\phi$,
we may assume that $g_1(V_\phi)$ is open in $F$ and
\[
g_1\colon V_\phi\to g(V_\phi)\sub F
\]
is a $C^r_\K$-diffeomorphism.
We choose an isomorphism of topological vector spaces $\alpha\colon E_1\to F$ (for example,
the restriction of $g'(\psi(x))$) and set $W:=\alpha(V_\phi)$.
Then
\[
\theta\colon W\times Q\to M_2,\quad
(w,q)\mto \psi^{-1}(g_1(\alpha^{-1}(w)),q)
\]
is a $C^r_\K$-diffeomorphism onto an open subset $O\sub U_\psi$.
Thus
\[
\theta^{-1}\colon O\to W\times Q\sub F\times Y=E_2
\]
is a chart for $M_2$.
We have
\begin{eqnarray*}
(f\circ \phi^{-1})(u)&=& (\psi^{-1}\circ\psi\circ f\circ\phi^{-1})(u)=\psi^{-1}(g(u))=\psi^{-1}(g_1(u),g_2(u))\\
&=&\psi^{-1}(g_1(\alpha^{-1}(\alpha(u))),g_2(u))=
\theta(\alpha(u),g_2(u))
\end{eqnarray*}
for each $u\in U_\phi$
and hence
\[
\theta^{-1}\circ f\circ \phi^{-1}=(\alpha,g_2)\colon U_\phi\to W\times Q\sub F\times Y.
\]
Note that
\[
\beta\colon W\times Y\to W\times Y, \quad \beta(w,y):=(w,y-g_2(\alpha^{-1}(w))
\]
is a $C^r_\K$-diffeomorphism.
Hence also $\beta\circ\theta^{-1}$ is a chart for $M_2$. By the preceding, we have
\[
(\beta\circ \theta^{-1})\circ f\circ\phi^{-1}(u)=\beta(\alpha(u),g_2(u))=(\alpha(u),g_2(u)-g_2(\alpha^{-1}(\alpha(u))))
=(\alpha(u),0).
\]
Hence $(\beta\circ \theta^{-1})\circ f\circ \phi^{-1}=\alpha|_{V_\phi}\colon V_\phi\to F\sub E_2$.
Thus $f$ is a $C^r_\K$-immersion.\,\Punkt
\section{Proof of Theorem I}
We first prove the theorem and then discuss an example.\\[2.3mm]
{\bf Proof of Theorem~I.}
Since $q$ is a submersion, $G_x=q^{-1}(\{G_x\})$ is a smooth submanifold
of~$G$.
Given $g\in G$, abbreviate $\sigma_g:=\sigma(g,.)$.
We have $(b\circ \lambda_g)(h)=b(gh)=\sigma(gh,x)=\sigma(g,.)(b(h))$
for each $h\in G$ and thus $b\circ \lambda_g=\sigma_g\circ b$, entailing that
\begin{equation}\label{equib}
T\lambda_g\circ Tb=T\sigma_g\circ Tb.
\end{equation}
Likewise,
\begin{equation}\label{equitild}
T\Lambda_g\circ T\wt{b}=T\sigma_g\circ T\wt{b}
\end{equation}
with $\Lambda_g\colon G/G_x\to G/G_x$, $hG_x\mto ghG_x$.
Suppose there is $g\in G$ and $0\not=v\in T_{gG_x}G/G_x$ such that
$T\wt{b}(v)=0$. Using (\ref{equitild}) with $g^{-1}$ in place of $g$,
we see that also $T\wt{b}T\Lambda_{g^{-1}} v=T\sigma_{g^{-1}}T\wt{b}v=0$,
where $0\not=T\Lambda_{g^{-1}}v\in T_{G_x}G/G_x$.
We may therefore assume that $g=e$.
To derive a contradiction, we write $v=Tq(w)$ with some $w\in L(G)$.
Since $\wt{b}\circ q=b$, we have $T\wt{b}\circ Tq=Tb$.
Thus $Tb(w)=0$. Since $T_eq=L(G_x)$
by Lemma~\ref{lafolk} and $v=T=eq(w)\not=0$, we have
\begin{equation}\label{thecontra}
w\not\in L(G_x).
\end{equation}
Now consider the one-parameter group
\[
\gamma\colon \R\to G,\quad \gamma(t):=\exp_G(tw).
\]
Then $\gamma'(t)=T\lambda_{\gamma(t)}v$ for each $t\in \R$
and hence
\[
(b\circ \gamma)'(t)=TbT\lambda_{\gamma(t)}v=T\sigma_{\gamma(t)}Tb(v)=0
\]
for each $t\in \R$, entailing that $b\circ\gamma$ is constant and hence
$b(\gamma(t))=x$ for all $t\in \R$.
We can therefore consider $\gamma$ also as a smooth map to $G_x$
and deduce that $v=\gamma'(0)\in T(G_x)$, contradicting~(\ref{thecontra}).
This completes the proof.\,\Punkt
\begin{example}\label{conjuex}
Let $M$ be a compact smooth manifold
and $H$
be a compact Lie group.
We consider
the conjugation action of $H$
(viewed as the group of constant maps) on the Fr\'{e}chet-Lie group
$G:=C^\infty(M,H)$.
Given $\gamma\in H$, its
stabilizer
\[
H_\gamma=\{h\in H\colon h\gamma h^{-1}=\gamma\}
\]
is a closed subgroup of~$H$ and thus $H/{H_\gamma}$ is a compact
real analytic manifold. As the quotient map $q\colon H\to H/H_\gamma$
is a real analytic submersion and the orbit map
\[
H\to G,\quad h\mto h\gamma h^{-1}
\]
is real analytic, also the induced injective map
\[
\wt{b} \colon H/H_\gamma\to G,\quad h H_\gamma\mto h\gamma h^{-1}
\]
is real analytic. Since $H$ is finite-dimensional, it has a real analytic exponential
map and thus Theorem~I
shows that $T\wt{b}$ is injective. Since $H/H_\gamma$ is finite-dimensional,
this means that $\wt{b}$ is a na\"{\i}ve immersion
and hence a real analytic immersion, by Theorem~H.
But $\wt{b}$ is also a topological embedding (since $H/H_\gamma$ is compact).
Hence $\wt{b}$ is a real analytic embedding and thus $\gamma^H=\wt{b}(H/H_\gamma)$
is a real analytic submanifold of $C^\infty(M,H)$ and $\wt{b}\colon H/H_\gamma\to \gamma^H$
is a real analytic diffeomorphism, by Lemma~\ref{easieremb}.
\end{example}
\section{Proof of Theorem J}
For each $x\in M$, we choose a $C^r$-diffeomorphism
$\phi_x \colon U_x \to V_x$ from an open neighbourhood $U_x\sub M$ of~$x$
onto an open subset $V_x\sub E$.
As we assume that $M$ is $C^r$-regular,
we find a $C^r$-function $h_x \colon M \to \R$
with $\Supp(h_x)\sub U_x$
such that $h_x|_{W_x}=1$
for some open neighbourhood $W_x\sub U_x$ of~$x$.
We also find a $C^r$-function $g_x \colon M \to\R$
such that $g_x(x)=1$ and $g_x|_{M\setminus W_x}=0$.
Now
\[
\psi_x\colon M\to E,\quad
y\mto \left\{
\begin{array}{cl}
h_x(y)\phi_x(y) & \mbox{if $y\in U_x$;}\\
0 &\mbox{if $y\in M\setminus\Supp(h_x)$}
\end{array}
\right.
\]
is a well-defined $C^r$-map.
We consider the $C^r$-map
\[
\theta := (\psi_x,g_x)_{x\in M} \colon M \to (E \times \R)^M .
\]
To see that~$\theta$ is injective, let $x,y\in M$ such that
$\theta(x)=\theta(y)$; then $g_x(y)=g_x(x)=1$
and thus $y \in W_x$, whence $h_x(y)=1$ and thus
$\phi_x(x)=\psi_x(x)=\psi_x(y)=\phi_x(y)$, entailing that $x=y$.\\[2.3mm]
To see that the image $\theta(M)$ is a $C^r$-submanifold of $(E \times \R)^M$,
consider a point $\theta(x)\in \theta(M)$ with $x\in M$.
The set
\[
Y_x:=\{y\in M\colon g_x(y)\not=0\}
\]
is an open neighbourhood of~$x$ in~$W_x$.
The set
\[
Z_x := \phi_x(Y_x)
\]
is an open neighbourhood of $\phi_x(x)$ in~$V_x$.
Consider the evaluation
\[
\ev_x \colon  (E \times \R)^M \to E \times \R,\quad f\mto f(x)
\]
at~$x$, which is a continuous linear map.
Abbreviate $\R^\times:=\R\setminus\{0\}$.
Then
\[
Q_x := \ev_x^{-1}(E \times\R^\times)
\]
is an open neighbourhood of $\theta(x)$ in~$(E\times\R)^M$.
We show that
\begin{equation}\label{isgrap}
Q_x \cap \theta(M)
\end{equation}
is the graph of the $C^r$-function
\[
f_x \colon Z_x \to \R \times (E \times \R)^{M \setminus\{x\}}
\]
with first component
$g_x \circ \phi_x^{-1} |_{Z_x}$
and $y$-component
\[
(\psi_y, g_y) \circ \phi_x^{-1} |_{Z_x}
\]
for $y\in M\setminus \{x\}$.
In fact, if $y\in M$ such that
$\theta(y) \in Q_x$, then $g_x(y)\not=0$, 
whence $y \in Y_x$ and thus $y = \phi_x^{-1}(z)$ for some
$z \in Z_x$, entailing that
\[
\theta(y)=\theta(\phi_x^{-1}(z)) = (z,f_x(z)).
\]
Conversely, $(z,f_x(z))=\theta(\phi_x^{-1}(z))\in Q_x\cap \theta(M)$
for each $z\in Z_x$. Thus (\ref{isgrap}) holds.\\[2.3mm]
As graphs of $C^r$-maps are $C^r$-submanifolds
and $\theta(M)$ locally looks like such within $(E\times\R)^M$,
we see that
$\theta(M)$ is a $C^r$-submanifold in $(E \times \R)^M$.
Since $\theta$ is $C^r$ as a map to $(E\times\R)^M$
and $\theta(M)$ (being locally a graph) is modelled
on split (and hence closed) vector subspaces of $(E\times \R)^M$,
we deduce that $\theta$ is $C^r$ also as a map to the
$C^r$-submanifold $\theta(M)$.
Moreover,
$\theta^{-1} \colon  \theta(M) \to M$ is $C^r$
as it is given as the projection onto the first
component on the above graph around $\phi(x)$, composed
with $\phi_x^{-1}$.\Punkt
\appendix
\section{Proof of Lemma~\ref{lafolk}}\label{apphomog}
(a)$\impl$(b):
If $q$ is a $C^r_\K$-submersion, then Lemma~\ref{propsec}
provides a $C^r_\K$-section $\sigma\colon P\to G$
for $q$, defined on an open neighbourhood $P$ of $q(e)$ in~$G/H$.
Thus $q\circ\sigma=\id_P$.
Define $S:=\sigma(P)$.
Then
\[
SH=q^{-1}(P)
\]
is an open subset of~$G$.
The map
\[
\psi\colon P\times H\to SH,\quad (x,h)\to\sigma(x)h
\]
is $C^r_\K$ and a bijection.
The inverse map is
\[
SH\to P\times H,\quad z\mto (q(z),\sigma(q(x))^{-1}z)
\]
and hence $C^r_\K$.
Thus $\psi$ is a $C^r_\K$-diffeomorphism.
Since $P\times\{e\}$ is a $C^r_\K$-submanifold of $P\times H$,
we deduce that $S=\psi(P\times\{e\})$
is a $C^r_\K$-submanifold of the open subset $SH$ of~$G$ and hence of~$G$.
It remains to note that the product map
\[
p\colon S\times H\to SH
\]
is a $C^r_K$-diffeomorphism because $h\colon P\times H\to S\times H$,
$h(x,h):=(\sigma(x),h)=(\psi(x,e),h)$ is a $C^r_\K$-diffeomorphism
and $p\circ h=\psi$ is a $C^r_\K$-diffeomorphism.

(b)$\impl$(c) is trivial, as we can choose $W:=H$.

(c)$\impl$(b): Let $S\sub G$ be a $C^r_\K$-submanifold with $e\in S$
and $W\sub H$ be an open identity neighbourhood such that
$SW$ is open in $G$ and the product map $p\colon S\times W\to SW$
is a $C^r_\K$-diffeomorphism.
Let $V\sub G$ be an open identity neighbourhood such that $W=H\cap V$
and $S_1\sub S$ b an open identity neighbourhood such that
$(S_1)^{-1}S_1\sub V$ and hence
\[
H\cap S_1^{-1}S_1\sub W.
\]
Then $p(S_1\times W)=S_1W$ is open in~$G$, entailing that also the set
$S_1H=S_1WH=\bigcup_{h\in H}S_1Wh$
is open in~$G$.
If $s_1,s_2\in S_1$ and $h_1,h_2\in H$ are such that
$s_1h_1=s_2h_2$, then $s_2^{-1}s_1=h_2h_1^{-1}\in H\cap S_1^{-1}S_1\sub W$
and hence $p(s_1,e)=s_1=s_2(h_2h_1^{-1})=p(s_2,h_2h_1^{-1})$
implies that $s_1=s_2$ and $h_1=h_2$.
As a consequence,
the product map
\[
p_1\colon S_1\times H\to S_1H
\]
is a bijection. We have already seen that the image is open.
Moreover, $p_1$ is $C^r_\K$ and equivariant for the
right $C^r_\K$-actions of $H$ on $S_1\times H$ and $G_1H$ given by $(s,h).h_1:=(s,hh_1)$ and
$g.h:=gh$, respectively.
The map $p_1$ induces a $C^r_\K$-diffeomorphism from the open set $S_1\times W$ onto its open image.
For $h_1\in H$, we have $p_1(s,h)=p_1(s,hh_1^{-1})h_1$ for all $(s,h)$
in the open set $(S_1\times W)h_1$.
As the open sets $(S_1\times W)h_1$ cover $S_1\times H$ for $h_1\in H$,
we deduce that $p_1$ is a $C^r_\K$-diffeomorphism.

(b)$\impl$(a): Let $S\sub G$ be a $C^r_\K$-submanifold with $e\in S$ such that
$SH$ is open in $G$ and the product map $p\colon S\times H\to SH$
is a $C^r_\K$-diffeomorphism.
We endow $G/H$ with the quotient topology which makes $G/H$ a Hausdorff space
(as $H$ is closed in $G$) and turns $q\colon G\to G/H$ into a continuous and open,
surjective map. Consider the map
\[
m\colon G\times G/H\to G/H,\quad (g,xH)\mto (gx)H
\]
and the group multiplication $\mu\colon G\times G\to G$.
Since $\id_G\times q\colon G\times G\to G\times G/H$
is a continuous and open surjection and hence a quotient map,
we deduce from the continuity of $m\circ  (\id_G\times q)=q\circ m$
that $m$ is continuous.
In particular,
\[
m_g:=m(g,.)\colon G/H\to G/H
\]
is a homeomorphism (with inverse $m_{g^{-1}}$),
for each $g\in G$.
Since $SH$ is open in~$G$ and $q$ is an open map,
we deduce that
\[
U:=q(SH)=q(S)
\]
is open in~$G/H$.
The map $q|_S\colon S\to U$ is bijective (since $p$ is a bijection)
and open, because $q|_S(V)=q(V\times H)$
is open in~$U$ for each open subset $V\sub S$.
For each $g\in G$, $gU:=m_g(U)$ is open in $G/H$.
We define
\[
\psi_g:=m_g\circ q|_S \colon S\to gU.
\]
Because $m_g$ is a homeomorphism, $gU$ is open in $G/H$.
Moreover, $\psi_g$ is a homeomorphism as it is a composition of homeomorphisms.
If $g_1,g_2\in G$, then $g_1U\cap g_2U$ is open in~$G/H$
and
\begin{eqnarray*}
\psi_{g_1}^{-1}(g_1U\cap g_2U)&=& \{
x\in S\colon (\exists y\in S)\; g_1xH=g_2yH\}\\
&=&\{x\in S\colon g_2^{-1}g_1x\in SH\} 
=
S\cap g_1^{-1}g_2SH.
\end{eqnarray*}
Moreover,
$\psi_{g_2}^{-1}\circ \psi_{g_1}$ is the $C^r_\K$-map
\[
S\cap g_1^{-1}g_2SH\to S\cap g_2^{-1}g_1SH,\quad x\mto g_2^{-1}g_1x.
\]
Therefore the maps $\phi\circ \psi_g^{-1}|_{\psi_g(U_\phi)}$,
with $\phi\colon U_\phi\to V_\phi$ ranging through the charts of~$S$
is a $C^r_\K$-atlas for $G/H$ and is contained in a maximal
$C^r_\K$-atlas which makes $G/H$ a $C^r_\K$-manifold and turns each of the
maps $\psi_g$ into a $C^r_\K$-diffeomorphism.
Now $\theta\colon gU\times H\to gSH$, $\theta(x,h):=g\psi_g^{-1}(x)h$
is a $C^r_\K$-diffeomorphism for $g\in G$ such that
$gSH$ is an open neighbourhood of~$g$ in~$G$
and
\[
(q\circ \theta)(x,h)=q(\lambda_g(\psi_g^{-1}(x))h)
=q(\lambda_g\psi_g^{-1}(x))=q(q|_{gS}^{-1}(x))=x
\]
using that $\psi_g=m_g\circ q=q\circ \lambda_g$,
hence $\psi_g\circ \lambda_{g^{-1}}=q$ and hence
$\lambda_g\circ \psi_g^{-1}=(q|_{gS})^{-1}$.
Thus $q$ is a $C^r_\K$-submersion,
by Lemma~\ref{locapro}.\\[2.3mm]
To prove the remaining assertions,
let $m$ be as before and $\mu\colon G\times G\to G$ be the group multiplication.
Then $\id_G\times q\colon G\times G\to G\times G/H$ is a surjective $C^r_\K$-submersion
and $m\circ (\id_G\times q)=\mu$ is a $C^r_\K$-map.
The action $m$ is therefore $C^r_\K$, by Lemma~\ref{checkdiff}.
Since $q(H)=\{q(e)\}$, we have $\ch:=T_eH\sub \ker T_eq$.
Because $p$ in~(b) is a diffeomorphism
and $q|_S\colon S\to q(S)$ a diffeomorphism,
for each $v\in T_eG =T_eS\oplus \ch$
which is not in $\ch$, we have $v=s+h$ with $0\not=s\in T_eS$ and $h\in \ch$.
As $T_{(e,e)}p$ is the addition map $T_eS\times \ch\to T_eG$,
we deduce that $T_eq(v)=T_eq(T_ep(.,e)(s)+h)=T_e(q\circ p(.,e))(s)\not=0$
as $q\circ p(.,e)=\psi_e$
is a diffeomorphism.

If, finally, $N$ is a normal subgroup, write $\bar{\mu}$ for the group multiplication
of $G/H$. Since $q\times q$ is a $C^r_\K$-submersion and $\bar{\mu}\circ (q\times q)=q\circ\mu$
is $C^r_\K$, we deduce with Lemma~\ref{checkdiff} that $\bar{\mu}$ is $C^r_\K$.
Likewise, $\bar{\eta}=q\circ\eta$ holds for the group inversion maps,
and thus $\bar{\theta}$ is $C^r_\K$.\,\Punkt
{\small
Helge  Gl\"{o}ckner, Universit\"at Paderborn,
Institut f\"{u}r Mathematik,\\
Warburger Str.\ 100, 33098 Paderborn, Germany;
\,Email: {\tt glockner@math.upb.de}\vfill
\end{document}